\theoremstyle{theorem}
\newtheorem{theorem}{Theorem}[section]
\theoremstyle{corollary}
\newtheorem{corollary}[theorem]{Corollary}
\theoremstyle{lemma}
\newtheorem{lemma}[theorem]{Lemma}
\theoremstyle{example}
\newtheorem{example}[theorem]{Example}
\theoremstyle{definition}
\newtheorem{definition}[theorem]{Definition}
\theoremstyle{remark}
\newtheorem{remark}[theorem]{Remark}
\theoremstyle{proposition}
\newtheorem{proposition}[theorem]{Proposition}
\theoremstyle{claim}
\newtheorem{claim}[theorem]{Claim}
\newenvironment{Remark}{\begin{remark}\rm}{\end{remark}}
\def\id{\mbox{Id} }
\def\et{\quad\mbox{and}\quad}
\DeclareMathOperator{\GL}{GL}
\DeclareMathOperator{\SL}{SL}
\DeclareMathOperator{\Mat}{Mat}
\DeclareMathOperator{\pr}{pr}
\DeclareMathOperator{\im}{im}
\DeclareMathOperator{\rank}{rank}
\DeclareMathOperator{\Gr}{Gr}
\DeclareMathOperator{\F}{F}
\renewcommand{\id}{\textrm{id}}
\def\P{\mathbb{P}}
\def\PP{\mathbb{P}}
\def\C{\mathbb{C}}
\def\CC{\mathbb{C}}
\begin{document}

\title{Holomorphically equivalent algebraic embeddings}
\author{Peter Feller and Immanuel Stampfli}
\address{Boston College, Department of Mathematics, Carney Hall, Chestnut Hill, MA 02467, United States}
\email{peter.feller@alumni.unibe.ch}
\address{Jacobs University Bremen gGmbH, School of Engineering and Science, Department of Mathematics, Campus Ring 1, 28759 Bremen, Germany}
\email{immanuel.e.stampfli@gmail.com}

\subjclass[2010]{14R10, 32M17, 14J50}
\thanks{The authors gratefully acknowledge support by the Swiss National
Science Foundation Grants ``Positive Braids and Deformations" 155477
and ``Automorphisms of Affine Algebraic Varieties" 148627.}

\begin{abstract}
	We prove that two algebraic embeddings of a smooth variety $X$
	in $\CC^{m}$ are the same up to a holomorphic coordinate change,
	provided that $2 \dim X + 1$ is smaller than or equal to $m$.
	This improves an algebraic result
	of Nori and Srinivas. For the proof we extend a technique of
	Kaliman using generic linear projections of $\C^m$.
\end{abstract}
\maketitle
\section{Introduction}
In this paper we study algebraic embeddings of smooth algebraic varieties
(defined over $\C$) in $\C^m$. Given two embeddings $f$ and $g$ of an
algebraic variety $X$ in $\C^m$ it is natural to ask whether or not the two
embeddings are \emph{algebraically equivalent}, i.e.\ whether or not there exists
an algebraic automorphism $\phi$ of $\C^m$ such that $\phi\circ f=g$.
Only in special cases a full answer is known. Abhyankar, Moh and Suzuki
established that any two algebraic embeddings of $\C$ in $\C^2$ are
algebraically equivalent~\cite{AbMo_75}\cite{Su_74}. Nori
showed that any two algebraic embeddings of a smooth variety $X$ in $\C^m$
are algebraically equivalent as long as the following dimension condition holds:
$2\dim X+2\leq m$. This unpublished work of Nori was
generalized by Srinivas~\cite{Sri_91}.
In particular, for embeddings of $\C$ in $\C^m$,
the only unknown case is when $m$ equals $3$.
In fact, there are several examples of embeddings of $\C$ in $\C^3$  that
are conjectured to not be algebraically equivalent to the standard embedding;
for example, Shastri suggests examples coming from a knot-theoretic point of
view~\cite{Shastri_92}. For an overview of these embedding problems and how
they connect to the Cancallation Problem, the Linearization Problem,
and the Jacobian Conjecture in affine geometry, see Kraft and
van den Essen~\cite{Kra_96}\cite{Ess_04}.

Relaxing the condition of equivalence to the holomorphic setting,
Kaliman proved that any two algebraic embeddings $f$ and $g$ of
$\C$ in $\C^3$ are \emph{holomorphically equivalent}, i.e.\ there
exists a holomorphic automorphism (that is a biholomorphic self-map)
$\phi$ of $\C^3$ such that $\phi \circ f=g$~\cite{Ka_91}. In this article we
generalize this to the following result.

\begin{theorem}\label{thm:mainthm}
	Let $X$ be a smooth algebraic variety defined over $\C$ and
	$m \geq 2$ an integer.
	If $2\dim X+1\leq m$, then any two algebraic embeddings of $X$ in
	$\C^m$ are holomorphically equivalent.
\end{theorem}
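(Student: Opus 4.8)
The plan is to dispose of the single remaining case $m = 2n+1$, where $n := \dim X$, since for $m \ge 2n+2$ the two embeddings are already algebraically — hence a fortiori holomorphically — equivalent by the theorem of Nori and Srinivas. Write $\iota\colon \C^m \hookrightarrow \C^{m+1}$, $y \mapsto (y,0)$, for the inclusion as the finite hyperplane $H_0 = \{x_{m+1}=0\}$. Because $m+1 = 2\dim X + 2$, the Nori--Srinivas theorem applied in $\C^{m+1}$ supplies an algebraic automorphism $\Psi$ with $\Psi \circ \iota f = \iota g$. The first step I would carry out is to connect $\Psi$ to the identity by a holomorphic isotopy of automorphisms $(\Psi_t)_{t\in[0,1]}$; this is the first place the holomorphic category buys flexibility over the algebraic one, and I expect to arrange it by realizing the equivalence through automorphisms visibly isotopic to $\id$ (e.g.\ compositions of shear-type maps). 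Setting $G_t := \Psi_t \circ \iota f$ then yields a holomorphic isotopy of proper embeddings $X \to \C^{m+1}$ with $G_0 = \iota f$ and $G_1 = \iota g$.

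The heart of the argument, extending Kaliman's technique, is to push this isotopy back down to $\C^m$ by a \emph{single} generic linear projection. For $w \in \C^m$ let $\pi_w\colon \C^{m+1} \to \C^m$, $(y,s) \mapsto y - sw$, denote the projection along the direction $v = (w,1)$; since $\pi_w \circ \iota = \id$, the maps $F_t := \pi_w \circ G_t$ satisfy $F_0 = f$ and $F_1 = g$. I claim that for generic $w$ every $F_t$ is a proper holomorphic embedding, and the mechanism is a dimension count in the projectivized direction space $\PP(\C^{m+1}) = \PP^m$, whose real dimension is $2m = 4n+2$. For each fixed $t$ the directions destroying injectivity (secant directions of $G_t(X)$) or the immersion property (tangent directions) form a set of real dimension at most $4n$, while those destroying properness, namely the asymptotic directions $\overline{G_t(X)} \cap H_\infty$ at the hyperplane at infinity $H_\infty \subset \PP^{m+1}$, have real dimension at most $2n-2$. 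Crucially, the isotopy runs over the single \emph{real} parameter $t \in [0,1]$, so taking the union over $t$ raises these dimensions by only one, to at most $4n+1 < 4n+2$; the bad directions are therefore nowhere dense, and a generic $v=(w,1)$ avoids all of them simultaneously. This produces a holomorphic isotopy $(F_t)$ of proper embeddings of $X$ into $\C^m$ from $f$ to $g$.

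Finally I would invoke the Andersén--Lempert theory for $\C^m$ (the density property): because $m = 2n+1 = 2\dim X + 1$, an isotopy of proper holomorphic embeddings of the Stein manifold $X$ into $\C^m$ is realized by an ambient isotopy of holomorphic automorphisms $(\phi_t)$ with $\phi_0 = \id$ and $\phi_t \circ f = F_t$. Evaluating at $t = 1$ then furnishes a holomorphic automorphism $\phi := \phi_1$ of $\C^m$ with $\phi \circ f = g$, which is exactly the assertion.

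I expect the main obstacle to be the uniform control of the projection at infinity. The clean count above presupposes that the family $G_t(X)$ is tame near $H_\infty$ — that the boundary trace $\overline{G_t(X)} \cap H_\infty$ neither jumps in dimension nor degenerates for exceptional $t$ — and that the projected slices $F_t$ are genuinely closed, not merely injective immersions. Securing closedness of each slice uniformly across the whole interval is precisely the delicate component of Kaliman's projection method that must be upgraded to this generality; this, together with checking that the Andersén--Lempert realization survives at the boundary dimension $m = 2\dim X + 1$ rather than only in the strict range $m > 2\dim X + 1$, is where the real difficulty lies.
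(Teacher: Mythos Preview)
Your strategy is genuinely different from the paper's. The paper never passes to $\C^{m+1}$, never invokes Anders\'en--Lempert, and never works with a one-parameter family. Instead it replaces the coordinate functions of $f$ by those of $g$ one at a time: at each step it finds a linear projection $p\colon\C^{2n+1}\to\C^{2n}$ that is \emph{proper, immersive, and $2$-transversal} on the current embedding $f_l$ (a ``good projection''), and then uses an explicit overshear followed by a shear---built from Cartan's Theorem~B---to swap the deleted coordinate for a new one coming from $g$ (Proposition~\ref{prop:kalitrick}). The technical work (Section~\ref{sec:gpr}) is showing that good projections exist \emph{relative to} the already-replaced coordinates, which forces the direction $v$ to lie in a shrinking linear subspace $\PP^{2n-l}\subset\PP^{2n}$.

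Your outline has two genuine gaps, both of which you flag but neither of which you close. The first is fatal as written: your bound $\dim_\R\bigl(\overline{G_t(X)}\cap H_\infty\bigr)\le 2n-2$ uses that $G_t(X)\subset\C^{m+1}$ is algebraic, but your isotopy $\Psi_t$ is only holomorphic. For holomorphic embeddings this bound is simply false---the paper notes (Remark~\ref{remark:holomorphicProperness}) that there exist holomorphic embeddings $\C\hookrightarrow\C^k$ with $\omega_1$ equal to all of $\PP^{k-1}$---so your projected maps $F_t$ need not be proper for any $w$. The repair is available: the Nori--Srinivas automorphism is a product of algebraic shears and affine maps, each of which isotopes to $\id$ through \emph{algebraic} automorphisms, so one can arrange every $G_t(X)$ to be algebraic; but you must commit to this and then control the family $\bigcup_t\omega_1(G_t(X))$ uniformly, which is not automatic from the pointwise bound.

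The second gap is the appeal to Anders\'en--Lempert at the end. You are asking for an exact ambient realization of an isotopy of proper holomorphic embeddings at the borderline dimension $m=2\dim X+1$; Anders\'en--Lempert gives approximation on compacta, not exact interpolation along a submanifold, and the upgrade you need is itself a theorem of roughly the same strength as what you are proving. The paper sidesteps this entirely: the only holomorphic automorphisms it ever uses are the concrete overshear $(w,\lambda)\mapsto(w,e^{\psi_1(w)}\lambda)$ and shear $(w,\lambda)\mapsto(w,\lambda+\psi_2(w))$ in the proof of Proposition~\ref{prop:kalitrick}, so no black-box density or realization principle is required.
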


Recently, it was pointed out to the authors that this Theorem is already proved
by Kaliman~\cite[Theorem~0.1]{Ka_13}.

\begin{remark}
It will be clear from our proof of Theorem~\ref{thm:mainthm} that the
auto\-morphism realizing the holomorphic equivalence can be chosen to
be a product of linear maps, and holomorphic shears and overshears.
\end{remark}

Theorem~\ref{thm:mainthm} can be seen as a one-dimensional
improvement of Nori's result under a relaxed, holomorphic equivalence condition.
As a special case we have that any embedding of $\C^n$ in $\C^{2n+1}$ is
holomorphically equivalent to the standard embedding
of $\C^n$ in $\C^{2n+1}$ given by
\[
    (x_1,\ldots,x_n)\mapsto
    (x_1,\ldots,x_n,0,\ldots,0) \, .
\]

As another corollary of Theorem~\ref{thm:mainthm},
any two algebraic embeddings of a smooth curve in $\C^3$ are
holomorphically equivalent. In contrast to this, almost nothing is known
about the algebraic equivalence of two embeddings of a smooth curve in $\C^3$.
Even if you are given two embeddings of a smooth curve in
$\C^2\times\{0\}\subset\C^3$,
which are seemingly simple embeddings from a three-dimensional point of view,
it is in general not known whether or not they are algebraically equivalent in $\C^3$;
compare Bhatwadekar and Roy~\cite{Bht_91},
where this question is asked in general and resolved for rational smooth curves.
Note that, while $\C$ has only one algebraic embedding in $\C^2$ up to 
algebraic equivalence, other smooth curves allow a lot 
of algebraically non-equivalent
embeddings into $\C^2$. For example, this is the case for algebraic
embeddings of $\CC^\ast$ in $\CC^2$; a partial
classification can be found in \cite{CKR_09}
and a full classification is announced in~\cite{KPR_14}.
In fact, these algebraic embeddings are often also holomorphically non-equivalent.
Indeed, the embeddings $f_{p, q} \colon \CC^\ast \hookrightarrow \CC^2$
given by $x\mapsto (x^p, x^{-q})$  for coprime positive integers $2\leq p<q$
are pairwise holomorphically non-equivalent.
The latter follows from the fact that these embeddings have different links at infinity,
see Neumann~\cite{Neumann_89}.

We note that the existence of holomorphically non-equivalent algebraic
embeddings of $\C^*$ in $\C^2$ shows that the dimension condition in
Theorem~\ref{thm:mainthm} cannot be improved if $\dim X=1$.
To our knowledge, for $\dim X >1$ there are no holomorphically
non-equivalent algebraic embeddings of a $n$-dimensional variety in
$\C^{2n}$ known, and so it remains an interesting question whether or not the
dimension condition in Theorem~\ref{thm:mainthm} can be improved.

We remark that from the holomorphic point of view it might seem natural to study
holomorphic (rather than algebraic) embeddings up to holomorphic equivalence.
However, in this setting the analogues of Theorem~\ref{thm:mainthm} and
Nori's result are known to be wrong. In fact, whenever $1\leq  n<m$ there
exist holomorphic embeddings of $\C^n$ in $\C^m$ that are not
holomorphically equivalent to the standard embedding
(see Forstneri{\v{c}}~\cite[Corollary 6.2]{For_99}). This makes the proof of
Theorem~\ref{thm:mainthm} subtle: we use a result that yields holomorphic not
algebraic equivalence, see Proposition~\ref{prop:kalitrick}; however, the projection
results we use only hold for algebraic (rather than holomorphic) embeddings,
compare Remark~\ref{remark:holomorphicProperness}.

\smallskip
We conclude the introduction with a short explanation on how the proof of
Theorem~\ref{thm:mainthm} works and how the paper is structured. We first
recall the following projection technique that Nori's result is based on.
Let $f$ and $g$ be
two algebraic embeddings of a smooth variety $X$ in $\CC^m$
and assume $2 \dim X +2 \leq m$. Srinivas (following Nori)
shows that there exists a
linear coordinate change of $\CC^m$ such that
any combination of $m-1$ coordinate functions coming from $f$ or $g$
yields an embedding from $X$  into $\CC^{m-1}$\cite[Theorem~1']{Sri_91}.
This can be used to construct $m+1$ algebraically equivalent embeddings of $X$
by successively replacing the coordinate functions of $f$ with coordinate functions
of $g$; thus, proving the algebraic equivalence of $f$ and $g$.
Roughly, if $m = 2 \dim X+1$, the idea of the proof
of Theorem~\ref{thm:mainthm} is to replace the condition that any combination
of $m-1$ coordinate functions of $f$ and $g$
is an embedding
by the weaker condition that this map is a proper immersion and its image
has only transversal double points; and then essentially to use an idea of
Kaliman, with which he proved that any two algebraic embeddings
of $\CC$ in $\CC^3$ are holomorphically equivalent. We outline the proof of
Theorem~\ref{thm:mainthm} in Section~\ref{sec:strategy}.
Section~\ref{sec:gpr} contains the main technical work of this paper.
There we provide results concerning generic projections, which deal with the
issues that arise from working in one dimension lower.
Finally, we combine these technical results to yield the proof of
Theorem~\ref{thm:mainthm} in Section~\ref{sec:proof}.
	
\section{Outline of the proof}\label{sec:strategy}
Let $f,g$ be two embeddings of a smooth $n$-dimensional variety
$X$ in $\C^{m}$ with $m\geq 2n+1$ and $n > 0$.
We want to show that $f$ and $g$ are
holomorphically equivalent.
We only consider the case $m=2n+1$ as otherwise the result
follows from Nori's work. Moreover, we can and will assume that every connected
component of $X$ has dimension $n$, see Lemma~\ref{lem:equidim}
in the Appendix.

We will inductively find a sequence $f_0=f$, $f_1$, $\ldots$ , $f_{2n}$,
$f_{2n+1}=g$ of $m$ algebraic
embeddings such that two consecutive
embeddings are holomorphically equivalent.
Concretely, for some
$0 \leq l < 2n+1$, assume that $f_l$ and a linear projection
$r_l \colon \CC^{2n+1} \to \CC^l$ have been constructed
such that the last $l$ coordinate functions of $f_l$ equal $r_l \circ g$ . The crucial
step is to construct an algebraic embedding
$f_{l+1} \colon X \to \C^{2n+1}$ that is holomorphically equivalent to $f_l$ and a
linear projection $ r_{l+1} \colon \CC^{2n+1} \to \CC^{l+1}$
such that the last $l+1$ coordinate functions of $f_{l+1}$ equal $r_{l+1} \circ g$.
From this, one gets the desired sequence of embeddings.

In the sequel we describe this crucial step in more detail. For this purpose we
introduce some terminology.
Let $h \colon X \to \CC^k$ be a holomorphic map. 	
A pair $(x, y)$ in $X \times X$
with $x \neq y$ is called a \emph{double point of $h$}, if
$h(x) = h(y)$. The double point is called \emph{transversal}, if
the images of the differentials
$D_x h$ and $D_y h$ span the whole tangent space
of $\CC^k$ in $h(x) = h(y)$.
If all double points of $h$ are transversal,
we say that $h$
is \emph{transversal}. If in addition every fiber of $h$ has at most
$j$ elements, then we say it is \emph{$j$-transversal}.

The following proposition is the only holomorphic ingredient in our proof.
	
\begin{proposition}\label{prop:kalitrick}
	Let $f \colon X \to \CC^{2n+1}$ be an algebraic
	embedding of a smooth
	$n$-dimensional variety and let $p \colon \CC^{2n+1} \to \CC^{2n}$
	be a linear projection
	such that $p \circ f$ is proper, 2-transversal and immersive.
	If $d \colon X \to \CC$ is an
	algebraic function such that $d(s_1) \neq d(s_2)$ for all
	double points $(s_1, s_2)$ of $p \circ f$, then
	\[
		X \to \CC^{2n+1} \, , \
		x \mapsto (p \circ f(x), d(x))
	\]
	is an algebraic embedding that is holomorphically equivalent to $f$.
\end{proposition}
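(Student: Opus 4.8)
The plan is to keep the first $2n$ coordinates fixed and to rotate the last coordinate of $f$ into the new function $d$, interpolating between $f$ and the candidate map by a family of embeddings, and then to invoke a holomorphic isotopy-extension theorem. I choose linear coordinates on $\CC^{2n+1}$ so that $p$ is the projection onto the first $2n$ factors, and write $a := z_{2n+1}\circ f$ for the last coordinate function of $f$, so that $f = (p\circ f, a)$ and the map in question is $F := (p\circ f, d)$. That $F$ is an algebraic embedding is quickly checked: it is immersive and proper because its first $2n$ components already are (namely $p\circ f$), and it is injective because $F(x)=F(y)$ with $x\ne y$ would force $(x,y)$ to be a double point of $p\circ f$, whence $d(x)\ne d(y)$ by hypothesis, a contradiction.

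For the equivalence I would consider, for $(\alpha,\beta)\in\CC^2\setminus\{0\}$, the algebraic map $\Phi_{\alpha,\beta} := (p\circ f,\ \alpha\, a+\beta\, d)\colon X\to\CC^{2n+1}$, which specializes to $f$ at $(\alpha,\beta)=(1,0)$ and to $F$ at $(0,1)$. I first record that the double-point locus of $p\circ f$ is finite: since $p\circ f$ is transversal and immersive, every double point is an isolated, transversal self-intersection of two $n$-dimensional sheets in $\CC^{2n}$, so the off-diagonal locus $\{(x,y): p\circ f(x)=p\circ f(y)\}$ is a $0$-dimensional algebraic subvariety of $X\times X$, hence finite. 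Consequently the set $B\subset\PP^1$ of directions $[\alpha:\beta]$ for which $\Phi_{\alpha,\beta}$ fails to be injective is finite: such a failure forces one of the finitely many double points $(x,y)$ together with $\alpha(a(x)-a(y))+\beta(d(x)-d(y))=0$. Moreover $B$ contains neither $[1:0]$ nor $[0:1]$, because at a double point $a(x)\ne a(y)$ (as $f$ is an embedding) and $d(x)\ne d(y)$ (by hypothesis).

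Since $\PP^1\setminus B$ is connected, I choose a smooth path $t\mapsto(\alpha_t,\beta_t)$ in $\CC^2\setminus\{0\}$ from $(1,0)$ to $(0,1)$ whose projectivization stays in $\PP^1\setminus B$, and set $\Phi_t:=\Phi_{\alpha_t,\beta_t}$. For every $t$ the map $\Phi_t$ is a proper algebraic embedding: it is immersive and proper because its first $2n$ components are the fixed map $p\circ f$, and injective because $[\alpha_t:\beta_t]\notin B$. Thus $\{\Phi_t\}_{t\in[0,1]}$ is a smooth isotopy of proper holomorphic embeddings with $\Phi_0=f$ and $\Phi_1=F$; since the first $2n$ components are the fixed proper map $p\circ f$, the total map $(x,t)\mapsto(\Phi_t(x),t)$ is proper as well.

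It remains to turn this isotopy of embeddings into an ambient automorphism, and this holomorphic step is the heart of the matter and the only place where we leave the algebraic category. Here I would invoke the isotopy-extension theorem of Andersén--Lempert theory for proper holomorphic embeddings of Stein manifolds into $\CC^{2n+1}$ (using the density property of $\CC^{2n+1}$, in the form exploited by Kaliman and Forstneri\v{c}--Rosay): the isotopy $\Phi_t$ extends to a smooth family of holomorphic automorphisms $\Psi_t$ of $\CC^{2n+1}$ with $\Psi_0=\id$ and $\Psi_t\circ f=\Phi_t$, each $\Psi_t$ being a composition (or limit of compositions) of linear maps, shears and overshears. Then $\phi:=\Psi_1$ is a holomorphic automorphism of $\CC^{2n+1}$ with $\phi\circ f=F$, proving that $F$ is holomorphically equivalent to $f$. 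I expect the main obstacle to be precisely this last extension step: unlike the projection arguments it has no algebraic analogue (the holomorphic flexibility is essential, since purely holomorphic embeddings can be knotted), so one must verify carefully that its hypotheses—uniform properness of the isotopy, which I checked above, together with the stable dimension range—are genuinely met before appealing to the Andersén--Lempert machinery.
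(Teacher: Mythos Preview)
Your argument is correct but follows a genuinely different route from the paper's. The paper constructs the equivalence explicitly as a composition of two fiber-preserving automorphisms of $\CC^{2n}\times\CC$: first an overshear $(w,\lambda)\mapsto(w,e^{\psi_1(w)}\lambda)$, with $\psi_1$ polynomial and chosen so that at each of the finitely many double values $s\in\CC^{2n}$ the function $d-e^{\psi_1\circ p\circ f}\cdot a$ agrees on the two preimages $s_1,s_2$ (possible since $a(s_1)\ne a(s_2)$); then a shear $(w,\lambda)\mapsto(w,\lambda+\psi_2(w))$, where $\psi_2$ is a holomorphic extension to $\CC^{2n}$, supplied by Cartan's Theorem~B, of the function on the image $\Gamma=p\circ f(X)$ through which the adjusted difference $d-e^{\psi_1\circ p\circ f}\cdot a$ now factors. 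Nothing beyond Cartan~B is used, and the automorphism is literally a single overshear followed by a single shear---this is what justifies the paper's remark that the equivalence can be realized by a product of linear maps, shears and overshears. Your approach instead interpolates $f$ and $F$ by a one-parameter family of embeddings and appeals to the Anders\'en--Lempert/Forstneri\v{c}--Rosay parametric machinery; this is valid (the images are algebraic of codimension $\ge 2$, so the hypotheses are met), but it imports the density property and a nontrivial ambient-isotopy theorem where an elementary two-step construction suffices, and it produces the automorphism only as a limit of shear-compositions rather than as an explicit finite product.
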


This proposition generalizes an idea of Kaliman,
see \cite[Theorem 6]{Ka_91} and its proof.

\begin{Remark}
\label{Rem:kalitrick}
It is worth mentioning that the proof
of Proposition~\ref{prop:kalitrick} carries
over to a holomorphic setting, i.e.~when $f$ and $d$ are holomorphic rather
than algebraic maps. To see this, in the proof below one only needs to replace
all algebraic maps by holomorphic ones and to notice that the image of
the double points of $p\circ f$ is a discrete closed subset of $\C^{2n}$.
\end{Remark}

\begin{proof}[Proof of Proposition~\ref{prop:kalitrick}]
	Let $h \colon \CC^{2n+1} \to \CC$ be a linear function such that
	$f = (p \circ f, h)$.
	Let $S \subseteq \CC^{2n}$ be the finite set of
	points $s \in \CC^{2n}$
	such that $(p \circ f)^{-1}(s) = \{ s_1, s_2 \}$
	with $s_1 \neq s_2$. For every $s$ in $S$,
	it holds that $h(s_1) \neq h(s_2)$ since $f$ is an embedding.
	Thus, we can choose a number $a_s \in \CC$ such that
	\[
		d(s_1) - e^{a_s} h(s_1) = d(s_2) - e^{a_s} h(s_2) \, .
	\]
	Let $\psi_1 \colon \CC^{2n} \to \CC$ be an algebraic
	function such
	that for all $s \in S$ we have $\psi_1(s) = a_s$ and let
	$\alpha_1$ be the holomorphic automorphism of
	$\CC^{2n} \times \CC$
	defined by $\alpha_1(w, \lambda) = (w, e^{\psi_1(w)}\lambda)$.
	By composing $f$ with $\alpha_1$, we can assume that
	$d(s_1) - h(s_1) = d(s_2) - h(s_2)$ for all $s \in S$.
	Note that in general
	$h$ is no longer algebraic. However, it is holomorphic.
	Let $\Gamma \subset \CC^{2n}$
	be the image of $p\circ f \colon X \to \CC^{2n}$. By
	Remmert's proper mapping theorem
	\cite[Satz~23]{Re_57},
	$\Gamma$ is a closed (analytic) subvariety of $\CC^{2n}$.
	Now, using that $p \circ f$ is immersive and
	2-transversal, we get a holomorphic factorization
	\[	
		\xymatrix{
			X \ar[rd]_-{d-h} \ar[r]^-{p \circ f} &
			\Gamma \ar@{.>}[d]^-e \\
			& \CC \ar@{}[r]|-{ \textrm{\normalsize.}} &
		}
	\]
	Using Cartan's Theorem~B
	\cite[Th\'eor\`eme~B]{Car_53}, we can
	extend $e$ to a holomorphic map $\psi_2 \colon \CC^{2n} \to \CC$.
	Let $\alpha_2$ be the holomorphic automorphism
	of $\CC^{2n} \times \CC$ defined by
	$\alpha_2(w, \lambda) = (w, \lambda + \psi_2(w))$.
	Thus, $\alpha_2 \circ f(x) = (p \circ f(x), d(x))$ and this map is algebraic.
\end{proof}
Proposition~\ref{prop:kalitrick} is used in the crucial step as follows.
We show the existence of a linear projection $p \colon \CC^{2n+1} \to \CC^{2n}$
such that $p \circ f_l$
is proper, 2-transversal, and immersive, and
such that the last $l$ coordinate functions of $p \circ f_l$ and $f_l$ are the same;
this constitutes the main technical work of our proof.
In fact, to find $p$
we have to compose $f_l$ with an algebraic auto\-morphism
of $\CC^{2n+1}$ that fixes the last $l$ coordinates.
Then we construct a surjective linear function
$e \colon \CC^{2n+1} \to \CC$ such that $e \circ g$ distinguishes the
double points of $p \circ f_l$. Now, Proposition~\ref{prop:kalitrick} implies
that
\[
	f_{l+1} = (p \circ f_l, e \circ g)  \colon X \to \CC^{2n} \times \CC
\]
is holomorphically equivalent to $f_l$. If we define
$r_{l+1} \colon \CC^{2n+1} \to \CC^{l} \times \CC$ by $r_{l+1}(z) = (r_l(z), e(z))$,
then the last $l+1$ coordinate functions of $f_{l+1}$ equal $r_{l+1}\circ g$.

\smallskip
Of course, the proof of our main theorem cannot
carry over to a holomorphic setting by the existence of
non-equivalent holomorphic
embeddings of $\CC^n$ in $\CC^{2n+1}$. While
Proposition~\ref{prop:kalitrick} remains true in the holomorphic setting
(see Remark~\ref{Rem:kalitrick}),
a serious issue is that for
generic linear projections $p \colon \CC^{2n+1} \to \CC^{2n}$
the map $p \circ f \colon X \to \CC^{2n}$ need not be proper if
$f \colon X \to \CC^{2n+1}$ is a holomorphic embedding
(see Remark~\ref{remark:holomorphicProperness}).

In Section~\ref{sec:proof} we provide the details of the proof,
based on several projection results.

\section{Generic projection results}\label{sec:gpr}
In this section we present projection results that are needed for our proof
of Theorem~\ref{thm:mainthm}. More precisely, they are needed to
guarantee that certain projections behave well in the following sense.

\begin{definition}
	Let $f \colon X \to \CC^{2n+1}$ be an embedding.
	A surjective linear map $p \colon \CC^{2n+1} \to \CC^{2n}$ is called a
	\emph{good projection} for $f$,
	if the composition $p \circ f$ is proper, immersive and $2$-transversal.
\end{definition}		

The moral of this section is, that generic projections are good projections and
that this holds in various
relative settings.
Before we start with these results, let us set up some terminology
and notation.

\smallskip

{\bf Terminology and notation.}
Let $Z$ be a variety.
We say that a statement is true for \emph{generic} $z \in Z$
if there exists a dense Zariski open subset $U \subseteq Z$ such
that the statement is true for all $z \in U$.
Maps between algebraic varieties are understood to be
algebric morphisms if not stated otherwise.

Let $0 \leq l \le m$ be integers. We denote the $m$-dimensional projective space
by $\PP^m$
and the Grassmannian of $l$-dimensional linear subspaces of $\C^m$ by
$\Gr^m_l$. The map
$p_l\colon \C^{m}\to \C^l$ denotes the standard projection to the last
$l$ coordinates and $\P^{m-l} \subseteq \P^{m}$
denotes the linear projective subspace
given by setting the last $l$
projective coordinates of $\P^m$ to zero.
If $V \subseteq \CC^{m}$ is a linear subspace, then
$p_V \colon \CC^m \to \CC^m / V$ denotes the canonical linear projection.
For $v \in \PP^{m-1}$, note that $p_l \colon \CC^m \to \CC^l$ factors through
 $p_v \colon \CC^m \to \CC^{m} /v$ if and only if $v \in \PP^{m-1-l}$.

\smallskip

In a first subsection, we gather results concerning transversal maps.
Afterwards, we divide the projection results into three subsections corresponding
to the three properties of a good projection.

Let us give a flavour of the
type of results we are looking for. Consider an algebraic embedding
$f \colon X \to \CC^{2n+1}$
of a smooth $n$-dimensional variety
$X$. We prove that for generic $v \in \PP^{2n}$, the map
$p_v \colon \CC^{2n+1} \to \CC^{2n+1} / v {=} \CC^{2n}$
is a good projection for $f$.
This is done in Corollary~\ref{corollary:baselproper},
Remark~\ref{rem:immersion} and Lemma~\ref{lemma:doublepointsllessn}.
Now, as mentioned in the outline,
for the inductive construction of the embedding $f_{l+1}$ from $f_l$
we need good projections $p_v$ for $f_l$ that ``remember"
the last $l$ coordinates, i.e.\ the projection
$p_l \colon \CC^{2n+1} \to \CC^l$ factors through $p_v$.
In other words, we would like
for generic $v \in \PP^{2n-l}$
that $p_v$ is a good projection for $f_l$ (where we view $\PP^{2n-l}$
as a linear projective subspace
of $\PP^{2n}$). Of course, this is only possible
under certain restrictions on the projection to the last
$l$ coordinate
functions $p_l \circ f \colon X \to \CC^l$.
These results are the content
of Corollary~\ref{corollary:baselproper},
Lemma~\ref{lemma:goodisgenericimmersion},
Lemma~\ref{lemma:doublepointsllessn} and
Lemma~\ref{lemma:doublepointslbiggern}.

\subsection{Transversal maps}

Throughout this subsection we fix a closed smooth
$n$-dimensional subvariety $X \subset \CC^{2n+1}$.
For any map $h \colon X \to \CC^{k}$, we consider
the map
\[
	\psi_h \colon X^2 \to \CC^{k} \, , \
		(x,y) \mapsto h(y)-h(x)
\]
and its composition with the canonical projection
$\pi \colon \CC^{k} \setminus \{ 0 \} \to \PP^{k-1}$
\[
	\overline{\psi}_h =
	\pi \circ \psi_h \colon X^2 \setminus \psi_h^{-1}(0) \to \PP^{k-1} \, .
\]
When $h$ is the fixed embedding $X \subset \C^{2n+1}$,
we denote $\psi_h$ by $\psi$ and $\overline{\psi}_h$ by $\overline{\psi}$.

\begin{lemma}
	\label{transkey.lem}
	If $h \colon X \to \CC^{k}$ is transversal,
	then $p_v \circ h$ is transversal for generic $v \in \PP^{k-1}$.
	In fact, $p_v \circ h$ is transversal if and only if
	$v$ is a regular value of $\overline{\psi}_h$.
\end{lemma}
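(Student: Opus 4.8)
The plan is to establish the ``if and only if'' characterization first, since the genericity assertion then follows immediately: by generic smoothness (the characteristic-zero algebraic analogue of Sard's theorem), the regular values of a morphism from the smooth variety $X^2 \setminus \psi_h^{-1}(0)$ to $\PP^{k-1}$ form a dense Zariski open subset, so $p_v \circ h$ is transversal for generic $v$. Thus the real content is the equivalence, and everything reduces to a pointwise analysis of the double points of $p_v \circ h$.

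First I would describe the double points of $p_v \circ h$ explicitly. A pair $(x,y)$ with $x \neq y$ is a double point of $p_v \circ h$ precisely when $p_v(h(y)-h(x)) = 0$, that is, when $\psi_h(x,y) = h(y)-h(x)$ lies on the line $v \subseteq \CC^k$. These split into two families: those with $h(x)=h(y)$, which are exactly the double points of $h$ itself and which lie in $\psi_h^{-1}(0)$, hence outside the domain of $\overline{\psi}_h$; and those with $h(x) \neq h(y)$, which are exactly the points of $\overline{\psi}_h^{-1}(v)$. Since $p_v$ is linear we have $D_x(p_v \circ h) = p_v \circ D_x h$, so the double point $(x,y)$ is transversal for $p_v \circ h$ if and only if $p_v(\im D_x h) + p_v(\im D_y h)$ is all of $\CC^k/v$, equivalently $\im D_x h + \im D_y h + v = \CC^k$. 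For a double point of $h$ this holds automatically, because transversality of $h$ already gives $\im D_x h + \im D_y h = \CC^k$; so only the points of $\overline{\psi}_h^{-1}(v)$ require attention.

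The key computation, and the step I expect to be the main obstacle, is to identify the image of the differential of $\overline{\psi}_h$ at such a point. Writing $\xi = \psi_h(x,y)$, so that $v = \CC\xi$, the chain rule gives $D_{(x,y)}\overline{\psi}_h = (D_\xi \pi) \circ D_{(x,y)}\psi_h$. One checks directly that $\im D_{(x,y)}\psi_h = \im D_x h + \im D_y h$, and that the differential of the projectivization $\pi$ at $\xi$ is, canonically, the quotient map $\CC^k \to \CC^k/\CC\xi = T_v\PP^{k-1}$, with kernel $\CC\xi$. Therefore
\[
\im D_{(x,y)}\overline{\psi}_h = \bigl(\im D_x h + \im D_y h + v\bigr)/v ,
\]
which equals all of $T_v\PP^{k-1}$ exactly when $\im D_x h + \im D_y h + v = \CC^k$, i.e.\ exactly when $(x,y)$ is a transversal double point of $p_v \circ h$. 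Pinning down the identification of $D_\xi\pi$ and reading off the resulting image correctly is the technical heart of the argument; the rest is bookkeeping.

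Finally I would assemble the pieces. The map $p_v \circ h$ is transversal if and only if all of its double points are transversal; by the two families above this is equivalent to requiring that every point of $\overline{\psi}_h^{-1}(v)$ be a regular point of $\overline{\psi}_h$, which is precisely the statement that $v$ is a regular value of $\overline{\psi}_h$. This proves the equivalence, and combined with the density of regular values noted at the outset it yields both assertions of the lemma.
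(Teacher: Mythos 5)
Your proposal is correct and follows essentially the same route as the paper's proof: both split the double points of $p_v \circ h$ into those coming from double points of $h$ (automatically transversal) and those in $\overline{\psi}_h^{-1}(v)$, and both identify the differential of $\pi$ at a point of $v$ with the projection $p_v$ (up to isomorphism of targets) to conclude that $\im D_{(x,y)}\overline{\psi}_h$ is full exactly when $\im D_x(p_v\circ h)+\im D_y(p_v\circ h)=\CC^k/v$. The paper phrases this as a rank comparison between $\overline{\psi}_h$ and $p_v\circ\psi_h$, while you compute the image directly through the quotient, but the content is the same.
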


 \begin{proof}[Proof of Lemma~\ref{transkey.lem}]
 Let $x \neq y$ in $X$ such that $h(x) = h(y)$. By assumption, $(x, y)$
 is then a transversal double point of $h$ and thus also of
 $p_v \circ f$, for all $v \in \PP^{k-1}$.

Now, fix $v$ in $\P^{k-1}$ and let $p = p_v \colon \CC^{k} \to \CC^{k} / v$.
One sees immediately that for $x, y \in X$ with $h(x) \neq h(y)$,
having the same image under $p \circ h$ is equivalent to $\overline{\psi}_h(x,y)=v$.
As for a given $w \in \C^{k} \setminus \{ 0 \}$ with $w \in v$,
we have $\ker(D_w \pi) = v$,
the differentials $D_w\pi$ and $D_w p = p$ are the same
up to a linear isomorphism between the targets.
Thus, for $x,y \in X$ with $0 \neq h(y)-h(x)\in v$ the differentials at $(x,y)$ of
\[
	\overline{\psi}_h \colon X^2\backslash \psi_h^{-1}(0)\to \P^{k-1}
	\et
	p \circ \psi_h
	\colon X^2\backslash\psi_h^{-1}(0) \to \C^{k}
\]
have the same rank.
Therefore, the statement follows from the fact that the image of
$D_{(x,y)} (p \circ \psi_h)$ is $\im D_{x}(p\circ h)+\im D_{y}(p\circ h)$.
\end{proof}

\begin{remark}\label{rem:psiisdominant}
After applying an algebraic automorphism of $\CC^{2n+1}$ to $X$,
one can assume that $\overline{\psi}$ is dominant.
This is seen as follows. We fix some $x \neq y$ in $X$ and study the linear
span $V_X$ of $x-y$, $T_x X$ and $T_y X$ inside $\C^{2n+1}$.
Note that $V_X=\C^{2n+1}$ if and only if $(x,y)$ is a regular point of
$\overline{\psi}$, by the proof of Lemma~\ref{transkey.lem}.
Therefore, it suffices to find an algebraic automorphism $\varphi$ of
$\C^{2n+1}$ such that $\varphi(x) = x$, $\varphi(y) = y$ and
$V_{\varphi(X)}=\C^{2n+1}$. However, such a $\varphi$ exists since
one can prescribe the derivative of an algebraic automorphism of
$\CC^{2n+1}$ in finitely many fixed points,
see Forstneri\v{c}~\cite[Corollary~2.3]{For_99} (citing Buzzard~\cite{Buz_98}).
In fact, in the case we need,  $\varphi$ can be chosen to be a composition
of algebraic shears and linear maps. For completeness, we provide
a suitable version of this result in the Appendix,
see Lemma~\ref{lem:prescribedjets}.
\end{remark}

\begin{lemma}\label{lemma:transversalmultipointsdescent}
Let $0 \leq l \leq 2n$ be an integer. If  $p_l |_X$
is transversal, then $p_v |_X$ is transversal for generic
$v \in \PP^{2n-l}\subseteq \PP^{2n}$.
\end{lemma}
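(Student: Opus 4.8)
The plan is to deduce the statement from Lemma~\ref{transkey.lem} applied to the inclusion $\iota\colon X\hookrightarrow\CC^{2n+1}$. Since $\iota$ is an embedding it has no double points and is therefore (vacuously) transversal, so by that lemma $p_v|_X$ is transversal if and only if $v$ is a regular value of $\overline{\psi}\colon X^2\setminus\psi^{-1}(0)\to\PP^{2n}$. Thus it suffices to show that generic $v\in\PP^{2n-l}$ is a regular value of $\overline{\psi}$. Observe that $\overline{\psi}$ is a morphism between varieties of equal dimension $2n$, and that a point $(x,y)$ lies in $\overline{\psi}^{-1}(\PP^{2n-l})$ exactly when the last $l$ coordinates of $y-x$ vanish, i.e.\ precisely when $(x,y)$ is a double point of $p_l|_X$.

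The crux is a linear-algebra computation, in the spirit of the proof of Lemma~\ref{transkey.lem}, showing that $\overline{\psi}$ meets the subvariety $\PP^{2n-l}\subseteq\PP^{2n}$ transversally at such a point $(x,y)$ if and only if $(x,y)$ is a transversal double point of $p_l|_X$. Writing $w=y-x$ and $v=[w]$, I identify $T_v\PP^{2n}\cong\CC^{2n+1}/\CC w$ via $D_w\pi$; under this identification $\im D_{(x,y)}\overline{\psi}=(T_xX+T_yX+\CC w)/\CC w$, while $T_v\PP^{2n-l}\cong(\ker p_l)/\CC w$, where $\ker p_l=\CC^{2n+1-l}$ contains $w$. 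The transversality condition $\im D_{(x,y)}\overline{\psi}+T_v\PP^{2n-l}=T_v\PP^{2n}$ then reads $T_xX+T_yX+\ker p_l=\CC^{2n+1}$, which upon applying $p_l$ becomes $\im D_x(p_l|_X)+\im D_y(p_l|_X)=\CC^l$, exactly transversality of the double point $(x,y)$ for $p_l|_X$. Since $p_l|_X$ is transversal by hypothesis, $\overline{\psi}$ meets $\PP^{2n-l}$ transversally along all of $W:=\overline{\psi}^{-1}(\PP^{2n-l})$.

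It remains to descend regular values. Because $X^2\setminus\psi^{-1}(0)$ is smooth and $\overline{\psi}$ meets the smooth subvariety $\PP^{2n-l}$ transversally, the preimage $W$ is smooth of codimension $l$, and a standard transversality argument shows that $v\in\PP^{2n-l}$ is a regular value of $\overline{\psi}$ if and only if it is a regular value of the restriction $\overline{\psi}|_W\colon W\to\PP^{2n-l}$; indeed transversality gives $T_{(x,y)}W=(D_{(x,y)}\overline{\psi})^{-1}(T_v\PP^{2n-l})$, from which the equivalence of the two surjectivity statements follows. Finally, generic smoothness in characteristic zero (the algebraic Sard theorem) applied to the morphism $\overline{\psi}|_W$ between the $(2n-l)$-dimensional varieties $W$ and $\PP^{2n-l}$ yields a dense Zariski open subset of $\PP^{2n-l}$ consisting of regular values of $\overline{\psi}|_W$, hence of $\overline{\psi}$, which completes the proof. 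I expect the main obstacle to be the tangent-space bookkeeping of the second paragraph: correctly tracking the differentials through the projectivization $\pi$ and the quotient by $\CC w$ so that transversality of $\overline{\psi}$ to $\PP^{2n-l}$ matches transversality of $p_l|_X$, together with the verification that regular values descend to $\overline{\psi}|_W$.
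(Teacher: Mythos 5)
Your proof is correct and takes essentially the same route as the paper's: both reduce, via the second part of Lemma~\ref{transkey.lem}, to showing that generic $v\in\PP^{2n-l}$ is a regular value of $\overline{\psi}$, both observe that the preimage of $\PP^{2n-l}$ consists of double points of $p_l|_X$ and that transversality of $p_l|_X$ makes $\overline{\psi}$ transversal to $\PP^{2n-l}$ there, and both conclude with the algebraic Sard theorem applied to the restriction of $\overline{\psi}$ to this preimage. The only difference is cosmetic: you carry out the normal-direction computation coordinate-free via the identification $T_{[w]}\PP^{2n}\cong\CC^{2n+1}/\CC w$, whereas the paper computes the corresponding map $\nu_{(x,y)}$ in affine charts.
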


 \begin{proof}
 Denote by $\Delta$ the diagonal in $X^2$.
 Let $C_{2n-l} \subseteq X^2 \setminus \Delta$ be the preimage of
 $\PP^{2n-l}$ under $\overline{\psi}$ and denote the restriction of $\overline{\psi}$
 to $C_{2n-l}$ by
 \[
 	\overline{\psi}' = \overline{\psi} |_{C_{2n-l}} \colon C_{2n-l}       \to \PP^{2n-l} \, .
 \]
 For $(x, y) \in C_{2n-l}$, consider the following linear map
  \[
  	\xymatrix@C=20pt{
 		\nu_{(x, y)} \colon T_{(x, y)} X^2 \setminus \Delta
		\ar[rr]^-{D_{(x, y)} \overline{\psi}} &&
		T_{\overline{\psi}(x, y)} \PP^{2n} \ar[r] &
		T_{\overline{\psi}(x, y)} \PP^{2n} /
		T_{\overline{\psi}(x, y)} \PP^{2n-l}
		{=} \CC^l \, .
	}
 \]
 For generic $v \in \PP^{2n-l}$, the differential
 $D_{(x, y)} \overline{\psi}'$ has full rank for all
 $(x, y) \in (\overline{\psi}')^{-1}(v)$ by an algebraic geometry version of Sards
 theorem\footnote{For lack of reference, we provide the needed version of
 Sard's theorem in the Appendix, see Lemma~\ref{lem:GenericSurj}.}.
 Thus, by the second part of Lemma~\ref{transkey.lem},
 it is enough to show that $\nu_{(x, y)}$
 is surjective for all $(x, y) \in C_{2n-l}$.
 Let $V_i \subset \PP^{2n-l}$ be the open affine subset of points such that the
 $i$-th coordinate does not vanish and let us denote by $h_i \colon X \to \CC$
 the $i$-th coordinate function of the embedding $X \subset \CC^{2n+1}$.
A calculation shows that for
 $(x, y) \in (\overline{\psi})^{-1}(V_i)$ the linear map $\nu_{(x, y)}$ is given by
 \[
 	T_{(x, y)} X^2 \setminus \Delta \to \CC^l \, , \
 	(w_1, w_2) \mapsto \frac{1}{h_i(y) - h_i(x)}
 	\big( (D_y p_l |_X) w_2 - (D_x p_l |_X ) w_1 \big) \, .
 \]
 As by assumption $p_l |_X$ is transversal, it follows for all
 $(x, y) \in X^2 \setminus \Delta$ that
 \[
 	T_{(x, y)} X^2 \setminus \Delta \to \CC^l \, , \
 	(w_1, w_2) \mapsto (D_y p_l |_X) w_2 - (D_x p_l |_X) w_1
 \]
 is surjective. This implies that
 $\nu_{(x, y)}$ is surjective for all
 $(x, y) \in (\overline{\psi})^{-1}(V_i)$.
 \end{proof}

The next lemma generalizes Lemma~\ref{transkey.lem}. It will be formulated in terms of flags. Denote by $\F_{2n+1}$ the flag variety of
$\C^{2n+1}$, i.e.\ the closed subvariety of the following product of Grassmannians
\[
	\Gr^{2n+1}_0 \times
	\Gr^{2n+1}_1 \times \cdots \times \Gr^{2n+1}_{2n+1}
\]
given by the condition that $(V_0,V_1,\ldots, V_{2n+1})$ satisfies
$V_i\subseteq V_{i+1}$.
Let us view $\GL_{2n+1}$ as an open subset of $(\C^{2n+1})^{2n+1}$ and let
$\eta \colon \GL_{2n+1} \to \F_{2n+1}$ be given by
\[
	\eta(v_1,v_2,\ldots,v_{2n+1})
	= \left(0, [v_1] , [v_1,v_2],\ldots,[v_1,v_2,\ldots,v_{2n+1}] \right)
\]
where $[v_1, \ldots, v_i]$ denotes the linear
span of $v_1, \ldots, v_i$ in $\CC^{2n+1}$.
Consider the $(2n+1)$-fold product of the map $\psi \colon X^2 \to \CC^{2n+1}$
\[
	\phi = \psi \times \cdots \times \psi
	\colon (X^2)^{2n+1} \to (\C^{2n+1})^{2n+1}
\]
and let $S = \phi^{-1}(\GL_{2n+1}) \subseteq (X^2)^{2n+1}$.
We define $\overline{\phi} \colon (X^2)^{2n+1} \dashrightarrow \F_{2n+1}$ as the
rational map $\eta \circ \phi \colon (X^2)^{2n+1} \dashrightarrow \F_{2n+1}$.
Note that $S$ is the set where $\overline{\phi}$ is defined.

\begin{lemma}
\label{lemma:flag}
Let $s=((x_1,y_1),\ldots,(x_{2n+1},y_{2n+1})) \in S$
and let the flag $(0 \subset W_1 \subset \cdots \subset \CC^{2n+1})$
be the image of $s$ under $\overline{\phi}$. Then, $s$
is a regular point of $\overline{\phi}$ if and only if $(x_i,y_i)$
is a transversal double point of $p_{W_i} |_X$ for
all $1 \leq i \leq {2n+1}$.
\end{lemma}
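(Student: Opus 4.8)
The plan is to reduce the statement to a linear-algebra computation of the differential $D_s\overline{\phi}$ and to read off its surjectivity, column by column, as the desired transversality conditions. Since $s$ lies in the locus $S$ where the rational map $\overline{\phi}=\eta\circ\phi$ is a morphism, $s$ being a regular point of $\overline{\phi}$ means exactly that $D_s\overline{\phi}$ is surjective onto $T_{\overline{\phi}(s)}\F_{2n+1}$. Writing $g=\phi(s)=(v_1,\ldots,v_{2n+1})\in\GL_{2n+1}$ with $v_i=y_i-x_i$, so that $W_i=[v_1,\ldots,v_i]$, the chain rule gives $D_s\overline{\phi}=D_g\eta\circ D_s\phi$, and I would analyse the two factors separately.

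For $D_s\phi$: as $\phi=\psi\times\cdots\times\psi$ is a product, its image is the product of the images of the factors. Exactly as in the proof of Lemma~\ref{transkey.lem} one has $D_{(x_i,y_i)}\psi(w_1,w_2)=w_2-w_1$, so its image is $U_i:=T_{x_i}X+T_{y_i}X\subseteq\CC^{2n+1}$. Identifying $(\CC^{2n+1})^{2n+1}$ with $\Mat_{2n+1}$ by reading the $i$-th factor as the $i$-th column, I obtain that $\im D_s\phi$ is the space of matrices whose $i$-th column lies in $U_i$. For $D_g\eta$: the key observation is that $\eta$ is nothing but the orbit map $g\mapsto g\cdot F_0$ for the standard flag $F_0$, i.e.\ the quotient map $\GL_{2n+1}\to\GL_{2n+1}/B=\F_{2n+1}$, where $B$ is the stabiliser of $F_0$, namely the group of invertible upper-triangular matrices. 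Hence $\eta$ is a submersion, and using $T_g\GL_{2n+1}=\Mat_{2n+1}$ its kernel is the tangent space $T_g(gB)=g\cdot T_{\id}B$. Since the $j$-th column of an upper-triangular matrix lies in $[e_1,\ldots,e_j]$, the $j$-th column of $g\cdot\xi$ lies in $[v_1,\ldots,v_j]=W_j$ and ranges over all of $W_j$; thus $\ker D_g\eta$ is precisely the space of matrices whose $j$-th column lies in $W_j$.

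This flag-variety tangent computation is the main technical point, and I expect it to be the chief obstacle to make fully rigorous: one must confirm that $\eta$ is a submersion and correctly identify the fibre tangent space $g\cdot T_{\id}B$ as the column-wise subspace above. Granting it, the two pieces combine cleanly. Because $D_g\eta$ is surjective, $D_s\overline{\phi}=D_g\eta\circ D_s\phi$ is surjective if and only if $\im D_s\phi+\ker D_g\eta=\Mat_{2n+1}$. Both summands are column-wise subspaces, so their sum is all of $\Mat_{2n+1}$ if and only if $U_i+W_i=\CC^{2n+1}$ for every $i$.

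It then remains to match this with the transversality statement. First, each $v_i=y_i-x_i$ is a nonzero element of $W_i$, so $p_{W_i}(x_i)=p_{W_i}(y_i)$ with $x_i\neq y_i$ and $(x_i,y_i)$ is genuinely a double point of $p_{W_i}|_X$. Second, applying $p_{W_i}$ shows $U_i+W_i=\CC^{2n+1}$ if and only if $p_{W_i}(T_{x_i}X)+p_{W_i}(T_{y_i}X)=\CC^{2n+1}/W_i$, that is, $\im D_{x_i}(p_{W_i}|_X)+\im D_{y_i}(p_{W_i}|_X)$ spans the target, which is exactly transversality of the double point $(x_i,y_i)$ of $p_{W_i}|_X$. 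Running this equivalence over all $i$ yields the claim; note that its $i=1$ component, where $W_1=[y_1-x_1]$, recovers the pointwise content of the second part of Lemma~\ref{transkey.lem}, so the present statement is the promised flag-theoretic generalisation.
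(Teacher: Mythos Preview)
Your proposal is correct and follows essentially the same route as the paper. Both arguments factor $D_s\overline{\phi}=D_g\eta\circ D_s\phi$, identify $\F_{2n+1}$ with $\GL_{2n+1}/B$ so that $\eta$ is the quotient map, compute $\ker D_g\eta=g\cdot T_{\id}B=W_1\times\cdots\times W_{2n+1}$ column-wise, and then read off the column-by-column condition $U_i+W_i=\CC^{2n+1}$; the paper merely packages the same computation by saying that $D_g\eta$ is, up to an isomorphism of targets, the product projection $p_{W_1}\times\cdots\times p_{W_{2n+1}}$, which is just the dual way of recording the kernel you identified.
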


 \begin{proof}

 Let $a = \phi(s) \in \GL_{2n+1}$. We claim that the differential
 \[
 	D_a \eta  \colon T_a \GL_{2n+1} \to T_{\eta(a)} \F_{2n+1}
 \]
 is the same as the projection
 \[
 	p_{W_1} \times \cdots \times p_{W_{2n+1}}
	\colon  (\CC^{2n+1})^{2n+1}\to
	\C^{{2n+1}}/W_1 \times\cdots
	\times\C^{2n+1}/W_{{2n+1}}
 \]
 (up to a linear isomorphism of the targets).
 Hence, $s$ is a regular point of $\overline{\phi}$ if and only if $(x_i,y_i)$
 is a regular point of the map $p_{W_i} \circ \psi \colon X^2 \to \C^{m}/W_i$
 for all $1\leq i\leq {2n+1}$.
 This finishes the proof since $(x,y)$ is a regular point of
 $p_{W_i} \circ \psi$ if and only if $(x,y)$
 is a transversal double point of $p_{W_i} |_X$.

 It remains to prove that $D_a \eta$ and
 $p_{W_1} \times \cdots \times p_{W_{2n+1}}$ are the same as claimed above.
 Noting that $ T_{\eta(a)} \F_{2n+1}$ and
 $\C^{2n+1}/W_1\times\C^{2n+1}/W_2\times\cdots\times\C^{2n+1}/W_{2n+1}$ have
 the same dimension, we have to show that
 $D_a \eta$ and  $p_{W_1} \times \cdots \times p_{W_{2n+1}}$ have the same
 kernel, i.e.\
 \[
 	\ker D_a \eta = W_1 \times W_2 \times \cdots \times
	W_{2n+1} \subset (\CC^{2n+1})^{2n+1}.
 \]
 Let $\varphi_a \colon \GL_{2n+1} \to \GL_{2n+1}$ be given by
 $\varphi_a(h) = ah$.
 Then, the differential $D_e \varphi_a$ identifies with
 $\Mat_{2n+1} \to \Mat_{2n+1}$, $M \mapsto aM$. Let
 $B \subseteq \GL_{2n+1}$ be the subgroup of upper triangular matrices.
 Viewing the flag variety $\F_{2n+1}$ as the quotient of
 $\GL_{2n+1}$ by right multiplication with $B$,
 the map $\eta$ identifies with the quotient map.
 Clearly, $\ker D_e \eta \subset\Mat_{2n+1}$
 are the upper triangular matrices. Thus,
 $\ker D_a \eta = D_e \varphi_a (\ker D_e \eta) \subset \Mat_{2n+1}$ consists
 of those matrices $M$ such that $a^{-1} M$ is upper triangular, i.e.~by
 letting $m = 2n+1$ and $a = (a_1 \ \cdots \ a_m)$
 	\begin{eqnarray*}
 		 M &=& \begin{pmatrix}
 		 a_1 & \cdots & a_{m}
 		      \end{pmatrix}
 		      \begin{pmatrix}
 			 b_{11} & \cdots & b_{1m} \\
 			 \vdots & \ddots & \vdots \\
 			 0 & \cdots  & b_{mm} \\
 		      \end{pmatrix} \\
 		      &= &
 		      \begin{pmatrix}
 		      	   b_{11} a_1 & b_{12} a_1 + b_{22} a_2 & \cdots &
 		      	   b_{1m} a_1 + \ldots + b_{mm} a_{m}
 		      \end{pmatrix} \\
 		      & \in & W_1 \times W_2 \times \cdots \times W_{m} \, .
 	\end{eqnarray*}
This proves the lemma.
 \end{proof}

\begin{Remark}
\label{rem:dominancPhi}
After applying an algebraic automorphism of $\CC^{2n+1}$ to $X$,
one can assume that
$\overline{\phi} \colon (X^2)^{2n+1} \dashrightarrow \F_{2n+1}$
is dominant. This is seen as follows.
By Remark~\ref{rem:psiisdominant},
we can assume that $\overline{\psi}$ is dominant.
This allows us to pick a basis $(v_1,\ldots,v_{2n+1})$ of
$\C^{2n+1}$ such that
for all $i$ we have that $[v_i]\in\P^{2n}$
is a regular value of $\overline{\psi}$ and it is the image of some
$(x_i,y_i) \in X^2$ under $\overline{\psi}$.
Then $((x_1,y_1),\ldots,(x_{2n+1},y_{2n+1}))$
is a regular point of $\overline{\phi}$, and so the statement follows.
\end{Remark}
As a consequence of Lemma~\ref{lemma:flag}
and Remark~\ref{rem:dominancPhi} we get the following.

\begin{corollary}
	\label{corollary:genericflags}
	After applying an algebraic automorphism of $\CC^{2n+1}$ to $X$,
	the maps $p_{W_1} |_X$, \ldots, $p_{W_{2n}} |_X$ are transversal
	for generic flags
	$(0 \subset W_1 \subset \cdots \subset W_{2n} \subset \CC^{2n+1})$
	in $\F_{2n+1}$.
\end{corollary}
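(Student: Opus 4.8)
The plan is to combine the characterization of regular points in Lemma~\ref{lemma:flag} with an algebraic Sard-type theorem, after first arranging via Remark~\ref{rem:dominancPhi} that $\overline{\phi}$ is dominant. The guiding idea is that a generic flag is a regular value of $\overline{\phi}$, and then to show that \emph{every} double point of each $p_{W_i}|_X$ can be fitted into the fiber over that flag, so that Lemma~\ref{lemma:flag} forces it to be transversal.

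First I would apply the algebraic automorphism of $\CC^{2n+1}$ supplied by Remark~\ref{rem:dominancPhi} so that $\overline{\phi}\colon (X^2)^{2n+1}\dashrightarrow \F_{2n+1}$ is dominant. Since $\overline{\phi}$ is a morphism on the smooth open subset $S\subseteq (X^2)^{2n+1}$ and $\F_{2n+1}$ is smooth, the algebraic version of Sard's theorem (Lemma~\ref{lem:GenericSurj}) produces a dense open set of flags $W=(0\subset W_1\subset\cdots\subset W_{2n+1}=\CC^{2n+1})$ that are regular values of $\overline{\phi}$; dominance moreover guarantees that the fiber $\overline{\phi}^{-1}(W)$ is nonempty for generic $W$. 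Fixing such a $W$, being a regular value means every $s\in\overline{\phi}^{-1}(W)$ is a regular point, so by Lemma~\ref{lemma:flag} each component $(x_k,y_k)$ of every such $s$ is a transversal double point of $p_{W_k}|_X$.

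Next I would record an elementary monotonicity observation: if $V\subseteq V'$ are linear subspaces and $(x,y)$ is a transversal double point of $p_V|_X$, then it is a transversal double point of $p_{V'}|_X$. Indeed $p_{V'}|_X=q\circ p_V|_X$ for the natural surjection $q\colon\CC^{2n+1}/V\to\CC^{2n+1}/V'$, and applying the surjection $q$ to the identity $\im D_x(p_V|_X)+\im D_y(p_V|_X)=\CC^{2n+1}/V$ yields the corresponding identity for $p_{V'}|_X$. With this in hand, I would fix $1\le i\le 2n$ and an arbitrary double point $(x,y)$ of $p_{W_i}|_X$, so $0\neq y-x\in W_i$, and let $j\le i$ be minimal with $y-x\in W_j$, hence $y-x\in W_j\setminus W_{j-1}$. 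By monotonicity it suffices to prove that $(x,y)$ is a transversal double point of $p_{W_j}|_X$. To achieve this I would take any template $s^0=((x_1^0,y_1^0),\ldots,(x_{2n+1}^0,y_{2n+1}^0))\in\overline{\phi}^{-1}(W)$ and replace its $j$-th entry by $(x,y)$; a direct check, using $[y_1^0-x_1^0,\ldots,y_k^0-x_k^0]=W_k$ together with $y-x\in W_j\setminus W_{j-1}$, shows that the consecutive differences of the new tuple $s$ still generate $W$ step by step, so $s\in\overline{\phi}^{-1}(W)$. As $W$ is a regular value, $s$ is a regular point, and Lemma~\ref{lemma:flag} then shows that its $j$-th component $(x,y)$ is a transversal double point of $p_{W_j}|_X$. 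Since $(x,y)$ was arbitrary, $p_{W_i}|_X$ is transversal for all $1\le i\le 2n$.

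The step I expect to be the crux is the swapping argument: one must verify that inserting the given double point at the \emph{minimal} level $j$ of the flag produces a configuration whose consecutive differences still generate $W_1\subset W_2\subset\cdots\subset\CC^{2n+1}$, so that it genuinely lies in the fiber $\overline{\phi}^{-1}(W)$ (in particular in $S$, since spanning the full flag forces linear independence). The monotonicity observation is exactly what lets me reduce an arbitrary double point, whose difference may lie deep inside the flag, to the minimal level where this insertion is possible; getting the interplay of these two reductions right is the real content, whereas the Sard input and the dominance are routine once the earlier results are granted.
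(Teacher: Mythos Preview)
Your proposal is correct and is precisely the argument the paper leaves implicit when it says the corollary follows from Lemma~\ref{lemma:flag} and Remark~\ref{rem:dominancPhi}. The swapping argument---inserting a given double point at the minimal level $j$ of the flag so as to remain in the fiber $\overline{\phi}^{-1}(W)$, then invoking monotonicity to pass back up to level $i$---is exactly the missing detail, and you have carried it out cleanly.
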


\subsection{Proper maps}
Let $1 \le k < m$ be integers and let $X \subseteq \CC^{m}$ be
a closed subvariety.
We consider the following subset of the Grassmannian $\Gr_k^m$
\[
	\omega_k(X) = \{ V \in \Gr^m_k | \, \exists \, \{ x^j \} \subseteq X \, , \,
					\lim_{j \to \infty} \pi(x^j) \subseteq V  \, ,
					\, | x^j | \to \infty \} \, .
\]
where $\pi \colon \CC^{m} \setminus \{ 0 \} \to \PP^{m-1}$ is the canonical
projeciton. In the case $k=1$, $\omega_1(X)$ is a closed analytic subset of
$\PP^{m-1} = \Gr_1^m$ by \cite[Corollary~5.1]{Chi_89}
and thus it is Zariski closed by Chow's Theorem \cite[Theorem~V]{Cho_49}).
Moreover, $\dim \omega_1(X) < \dim X$ and
$p_v |_X \colon X \to \CC^{m} / v$ is proper for all
$v \in \PP^{m-1} \setminus \omega_1(X)$ by \cite[Lemma~4.11.2]{Fo_11}.
This yields the following.

\begin{corollary}
	\label{corollary:baselproper}
	Let $m = 2n+1$ and let $X \subset \CC^{2n+1}$
	be $n$-dimensional. If $0 \leq l \leq n$, then $p_v |_X$ is proper for
	generic $v \in \PP^{2n-l}\subseteq \PP^{2n}$.
\end{corollary}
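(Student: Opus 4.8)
The plan is to reduce the statement to a dimension count, using the facts about $\omega_1(X)$ recalled immediately before the corollary. Since $m = 2n+1$, those facts say that $\omega_1(X)$ is a Zariski closed subset of $\PP^{2n} = \Gr_1^{2n+1}$ with $\dim \omega_1(X) < \dim X = n$, and that $p_v|_X \colon X \to \CC^{2n+1}/v$ is proper for every $v \in \PP^{2n} \setminus \omega_1(X)$. Therefore it suffices to show that $\PP^{2n-l} \setminus \omega_1(X)$ is a dense Zariski open subset of $\PP^{2n-l}$; for then $p_v|_X$ is proper for all $v$ in this subset, which is exactly the genericity claim for $v \in \PP^{2n-l}$.

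The only point to verify is that $\PP^{2n-l}$ is not entirely contained in $\omega_1(X)$. Granting this, $\omega_1(X) \cap \PP^{2n-l}$ is a proper Zariski closed subset of the irreducible variety $\PP^{2n-l}$, so its complement is dense and Zariski open, as needed. The non-containment follows by comparing dimensions: because $l \leq n$ we have $\dim \PP^{2n-l} = 2n-l \geq n$, while $\dim \omega_1(X) < n$. An irreducible variety of dimension at least $n$ cannot be contained in a closed subset of dimension strictly less than $n$, and hence $\PP^{2n-l} \not\subseteq \omega_1(X)$.

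I expect no genuine obstacle here: the substantive content is entirely carried by the input results on $\omega_1(X)$ drawn from \cite{Chi_89}, \cite{Cho_49} and \cite{Fo_11}, and the corollary merely intersects the bad locus with a linear subspace. The one place to stay careful is the interplay of the two hypotheses, namely that the bound $l \leq n$ is exactly what forces $\dim \PP^{2n-l} \geq n > \dim \omega_1(X)$, and hence that a generic member of the restricted family of projections $p_v$ with $v \in \PP^{2n-l}$ still avoids the non-proper locus. For $l > n$ this dimension comparison would break down, which is consistent with properness being the property that imposes the restriction $l \leq n$ in this corollary.
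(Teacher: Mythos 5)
Your proposal is correct and matches the paper's (implicit) argument: the corollary is stated there as an immediate consequence of the facts that $\omega_1(X)$ is Zariski closed of dimension $< \dim X = n$ and that $p_v|_X$ is proper off $\omega_1(X)$, with the same dimension comparison $\dim \PP^{2n-l} = 2n-l \geq n > \dim \omega_1(X)$ doing the work. Your write-up simply makes explicit the density/irreducibility step the paper leaves to the reader.
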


For arbitrary $k$ this generalizes to the following.

\begin{lemma}\label{lemma:baselproper}
	If $1 \leq k < m$, then
	$\dim \omega_k(X) < \dim \Gr^m_k - (m - k) + \dim X$
	and $p_V |_X \colon X \to \CC^m / V$ is proper for all
	$V \in \Gr^m_k \setminus \omega_k(X)$.
\end{lemma}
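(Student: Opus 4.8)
The plan is to reduce the statement to the case $k = 1$ recalled above by means of the set-theoretic identity
\begin{equation*}
\omega_k(X) = \{ V \in \Gr_k^m \mid v \subseteq V \text{ for some } v \in \omega_1(X) \} \, . \tag{$\ast$}
\end{equation*}
To see $(\ast)$, note that if $V \in \omega_k(X)$ is witnessed by a sequence $\{ x^j \} \subseteq X$ with $|x^j| \to \infty$ and $v := \lim_{j} \pi(x^j) \subseteq V$, then the very same sequence shows $v \in \omega_1(X)$; conversely, a sequence witnessing $v \in \omega_1(X)$ for some line $v \subseteq V$ also witnesses $V \in \omega_k(X)$. Thus $\omega_k(X)$ consists exactly of those $k$-planes that contain at least one line belonging to $\omega_1(X)$.

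For the dimension bound I would introduce the incidence variety
\begin{equation*}
I = \{ (v, V) \in \omega_1(X) \times \Gr_k^m \mid v \subseteq V \}
\end{equation*}
together with its two projections. The first projection $\pr_1 \colon I \to \omega_1(X)$ is a Grassmannian bundle: its fiber over a line $v$ is the set of $k$-planes through $v$, which is isomorphic to $\Gr_{k-1}^{m-1}$ and hence has dimension $(k-1)(m-k)$. Therefore $\dim I = \dim \omega_1(X) + (k-1)(m-k)$. By $(\ast)$, the second projection $\pr_2 \colon I \to \Gr_k^m$ has image $\omega_k(X)$, so
\begin{equation*}
\dim \omega_k(X) \leq \dim I = \dim \omega_1(X) + (k-1)(m-k) < \dim X + (k-1)(m-k) \, ,
\end{equation*}
where the strict inequality is the recalled bound $\dim \omega_1(X) < \dim X$. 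Since $\dim \Gr_k^m = k(m-k)$, the right-hand side equals $\dim \Gr_k^m - (m-k) + \dim X$, which is the claimed estimate.

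For the properness statement I would argue directly by the contrapositive. Suppose $V \in \Gr_k^m$ is such that $p_V |_X$ is not proper. Because $X$ is closed in $\CC^m$, bounded subsets of $X$ are relatively compact, so non-properness yields a sequence $\{ x^j \} \subseteq X$ with $|x^j| \to \infty$ whose image $\{ p_V(x^j) \}$ stays bounded. Passing to a subsequence, $x^j / |x^j| \to u$ for some unit vector $u$, and then $p_V(u) = \lim_j p_V(x^j)/|x^j| = 0$, since the numerators are bounded while $|x^j| \to \infty$. Hence the line $v = \lim_j \pi(x^j)$ spanned by $u$ is contained in $V$, which exhibits $V \in \omega_k(X)$. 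Equivalently, $V \notin \omega_k(X)$ forces $p_V |_X$ to be proper, as desired.

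The main obstacle --- or rather the only point needing genuine care --- is the passage from abstract non-properness to a concrete unbounded sequence with bounded image; once $(\ast)$ is in place, both the dimension count and the properness reduce to the $k = 1$ arguments. A secondary technical check is that $\pr_1 \colon I \to \omega_1(X)$ is locally trivial with fiber $\Gr_{k-1}^{m-1}$, so that the fiber-dimension formula holds on each component of $\omega_1(X)$ regardless of irreducibility.
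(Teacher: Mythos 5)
Your proposal is correct and follows essentially the same route as the paper: the identity $(\ast)$ is exactly the paper's observation that $\omega_k(X)$ is the image of the second projection from the incidence variety $E = \{(W,V) \in \omega_1(X) \times \Gr^m_k \mid W \subseteq V\}$, the fiber-bundle dimension count over $\omega_1(X)$ is identical, and the properness claim is proved by the same contrapositive sequence argument. You merely spell out a few steps (the limit computation $p_V(u)=0$ and the use of closedness of $X$) that the paper leaves implicit.
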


 \begin{proof}
 	Let $E = \{ \, (W, V) \in \omega_1(X) \times \Gr^m_k \ | \ W \subseteq V \, \}$.
 	Note that $\omega_k(X)$ is the image of the projection
 	$E \to \Gr^m_k$ to the second factor.
 	Projection to the first factor $E \to \omega_1(X)$ turns $E$ into
 	a fiber bundle with fiber-dimension $(m-k)(k-1) = \dim \Gr^m_k -(m-k)$.
 	This implies the dimension inequality.
 	Let $V \in \Gr^m_k$.
 	If $p_V |_X$ is not proper, then there exists
 	a sequence $ \{ x^j \} \subseteq X$, such that $| x^j | \to \infty$
 	and $p_V(x^j)$ is bounded. Without loss of generality, we can assume
 	that $\lim \pi(x^j) \in \PP^{m-1}$ exists. However, this implies
 	$\lim \pi(x^j) \subseteq V$ and thus $V \in \omega_k(X)$.
 \end{proof}

\begin{remark}
	\label{remark:holomorphicProperness}
	In a holomorphic setting Lemma~\ref{lemma:baselproper} and
	Corollary~\ref{corollary:baselproper} do not hold. Indeed,
	if $X$ is an analytic subvariety
	(rather than an algebraic subvariety),
	then the dimension of $\omega_1(X)$
	is no longer strictly smaller than the dimension of $X$
	(compare the comments before~\cite[Lemma~4.11.2]{Fo_11}).
	For example, for
	\[
		X=\{\,(t,e^t) \ |\ t\in\C\, \}\subset\C^2,
	\]
	the subset $\omega_1(X)\subseteq\P^1$ is equal to $\P^1$.
	More generally,
	there are holomorphic embeddings $f \colon \CC \to \CC^m$
	for any $m > 1$ such that $\omega_1(f(X)) = \PP^{m-1}$
	\cite[Proposition~2]{FR_96}.
\end{remark}

\subsection{Immersive maps}
Fix a closed smooth $n$-dimensional subvariety $X \subset \CC^{2n+1}$.
For any map $h \colon X \to \CC^k$ and any
integer $0 \leq i \leq k$, we denote
\[
	X_i = X_i(h) = \{ \, x \in X \ | \ \rank D_x h  = i \, \} \, .
\]
Hence, $\bigcup_{i=0}^k X_i$ is a partition of $X$
where every $X_i$ is locally closed in $X$.

\begin{lemma}\label{lemma:baselimmersion}
	Let $1 \leq k \leq 2n$ and let
	$0 \leq i < n$. Then for generic  $V \in \Gr^{2n+1}_k$ we have
	$\dim X_i(p_V |_X) \leq k + i-n-1$.
\end{lemma}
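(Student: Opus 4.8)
The plan is to convert the rank condition into linear algebra about tangent spaces and then run an incidence-variety dimension count controlled by a Schubert computation.

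Since $p_V$ is linear, its differential at every point is $p_V$ itself. Identifying the tangent space of $\CC^{2n+1}$ at $x$ with $\CC^{2n+1}$, the differential $D_x(p_V|_X)$ is the restriction of $p_V$ to the $n$-dimensional linear subspace $T_x X \subseteq \CC^{2n+1}$, and its kernel is $T_x X \cap V$. Hence $\rank D_x(p_V|_X) = n - \dim(T_x X \cap V)$, so that
\[
	X_i(p_V|_X) = \{\, x \in X \mid \dim(T_x X \cap V) = n-i \,\}
	\subseteq \{\, x \in X \mid \dim(T_x X \cap V) \geq n-i \,\} .
\]
It is therefore enough to bound the dimension of the larger set on the right. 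If $n - i > k$ this set is empty, since a $k$-plane cannot meet $T_x X$ in dimension larger than $k$, and there is nothing to prove; so I assume $n - i \leq k$.

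Next I would introduce the incidence variety
\[
	I = \{\, (x,V) \in X \times \Gr^{2n+1}_k \mid \dim(T_x X \cap V) \geq n-i \,\},
\]
which is closed, as the dropping of rank is a closed condition. Projecting to $X$, the fiber over $x$ is the Schubert variety $\{\, V \in \Gr^{2n+1}_k \mid \dim(V \cap T_x X) \geq n-i \,\}$ attached to the fixed $n$-plane $T_x X$. Since $\GL_{2n+1}$ acts transitively on $n$-planes, these fibers are all projectively equivalent, so $I \to X$ has constant fiber dimension. Writing $b := 2n+1-k-i$ and $a := n-i$, the standard Schubert count (dominate the locus by the pairs $(W,V)$ with $W \subseteq V \cap T_x X$ and $\dim W = a$, which contributes $a(n-a) + (k-a)(2n+1-k)$) gives that this fiber has codimension $(n-i)\,b$ in $\Gr^{2n+1}_k$ whenever $b \geq 0$, whence
\[
	\dim I = n + \dim \Gr^{2n+1}_k - (n-i)\,b .
\]
For $b < 0$ the Schubert fiber is all of $\Gr^{2n+1}_k$; that case is disposed of at the end.

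Now I would project $I \to \Gr^{2n+1}_k$ and apply the theorem on the dimension of the fibers of a morphism: for generic $V$ one has $\dim I_V \leq \dim I - \dim \Gr^{2n+1}_k = n - (n-i)\,b$, and if this projection is not dominant then $I_V$ is even empty for generic $V$. As $X_i(p_V|_X) \subseteq I_V$, the same bound holds for $X_i$. The desired estimate reads $k + i - n - 1 = n - b$, so it remains to check $n - (n-i)\,b \leq n - b$, that is $(n-i-1)\,b \geq 0$; this holds when $b \geq 0$, because $i \leq n-1$. In the remaining range $b < 0$, equivalently $k + i \geq 2n+2$, the bound is automatic, since then $k + i - n - 1 \geq n+1 > n \geq \dim X_i$. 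The main obstacle is the Schubert codimension computation together with the verification that $I \to X$ has constant fiber dimension, so that $\dim I$ is exactly as claimed; once these are in place, the generic-fiber estimate and the one-line arithmetic complete the argument.
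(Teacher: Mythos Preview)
Your proof is correct and follows essentially the same approach as the paper: an incidence-variety dimension count followed by the same arithmetic inequality $(n-i-1)b\geq 0$. The paper's incidence variety $E=\{(W,V)\in\Gr_{n-i}(TX)\times\Gr^{2n+1}_k \mid W\subseteq V\}$ explicitly records the $(n-i)$-plane $W$ that you introduce only in your parenthetical Schubert count, but the resulting dimension bound and the generic-fiber argument are identical.
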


 \begin{proof}[Proof of Lemma~\ref{lemma:baselimmersion}]
 	We can assume that $n-i \leq k \leq 2n+1-i$, as otherwise
 	$X_i(p_V |_X)$ would be empty.
	We denote by $\Gr_j(TX)$ the bundle over $X$
	of \mbox{$j$-dimensional} linear subspaces of the tangent spaces of $X$.
 	Let
	\[
		E = \{ \, (W, V) \in \Gr_{n-i}(TX) \times \Gr^{2n+1}_k \ | \
		W \subseteq V \, \}
	\]
 	and let $E_V$ be the fiber over $V \in \Gr^{2n+1}_k$ under
 	the projection onto the second factor $E \to \Gr^{2n+1}_k$.
 	The projection $E \to \Gr_{n-i}(TX) \to X$ yields a surjective map
 	\[
 		E_V \to \{ \, x \in X \ | \ \dim T_x X \cap V \geq n-i \, \} \, .
 	\]
 	This implies $\dim X_i(p_V |_X) \leq \dim E_V$.
 	Projection to the
 	first factor turns $E$ into a
 	fiber bundle over $\Gr_{n-i}(TX)$ with fiber-dimension
 	$(2n+1-k)(k-n+i)$. Thus, we have
 	\[
 		\dim E = (2n+1-k)k + (n-i)(k+i-2n-1) + n \, .
 	\]
 	Hence, either $E_V$ is empty for generic $V \in \Gr_k^{2n+1}$
	or we have
	\[
		\dim E_V = (n-i)(k+i-2n-1) + n \geq 0 \
		\textrm{for generic $V \in \Gr_k^{2n+1}$} \, .
	\]
 	Since $k \leq 2n+1-i$ and $i < n$,
	we get $(n-i)(k+i-2n-1) + n \leq k+i-n-1$. This implies the result.
 \end{proof}

\begin{lemma}\label{lemma:goodisgenericimmersion}
Let $0 \leq l \leq 2n$. If $\dim X_i(p_l |_X) \leq n+i-l$
for all $0 \leq i < n$, then $p_v |_X$ is immersive for generic
$v \in \PP^{2n-l}\subseteq \PP^{2n}$.
\end{lemma}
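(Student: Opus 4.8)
The plan is to reduce immersivity of $p_v|_X$ to a tangency condition on $v$ and then to bound the dimension of the locus of bad directions $v$ by an incidence-variety argument driven by the hypothesis on the strata $X_i(p_l|_X)$. First I would record the pointwise criterion: since $p_v \colon \CC^{2n+1} \to \CC^{2n+1}/v$ is linear, $D_x(p_v|_X)$ is just the restriction of $p_v$ to $T_x X$, so its rank equals $n - \dim(T_x X \cap v)$. As $v$ is a line, $p_v|_X$ fails to be immersive at $x$ precisely when $v \subseteq T_x X$, i.e.\ $v \in \PP(T_x X)$. Hence $p_v|_X$ is immersive if and only if $v$ avoids $\bigcup_{x\in X}\PP(T_x X)$. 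Since we only range over $v \in \PP^{2n-l}$, which is exactly $\PP(\ker p_l)$ (the last $l$ projective coordinates vanish), I would encode the relevant incidences by the closed subvariety
\[
	I = \{ (x,v) \in X \times \PP^{2n-l} \ | \ v \in \PP(T_x X) \} ,
\]
namely the intersection of the projectivized tangent bundle $\PP(TX) \subseteq X \times \PP^{2n}$ with $X \times \PP^{2n-l}$. The goal then becomes to show $\dim I < 2n-l$: the closure of the image of the projection $I \to \PP^{2n-l}$ has dimension at most $\dim I$, so if this is $<2n-l$ it is a proper closed subvariety, and its dense open complement is the sought-after locus of good directions $v$.

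Next I would compute $\dim I$ by stratifying along the fibers of $I \to X$. Because $v \in \PP^{2n-l}$ means $v \subseteq \ker p_l$, the fiber over $x$ is $\PP(T_x X \cap \ker p_l) = \PP(\ker D_x(p_l|_X))$, which over the stratum $X_i = X_i(p_l|_X)$ has constant dimension $(n-i)-1$. Thus $I = \bigcup_{i<n} I_i$, where $I_i \to X_i$ is a $\PP^{n-i-1}$-bundle (the projectivization of the rank-$(n-i)$ kernel bundle of the constant-rank map $TX|_{X_i} \to X_i \times \CC^l$), so $\dim I_i = \dim X_i + (n-i-1)$; for $i=n$ the fiber is empty and contributes nothing. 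Feeding in the hypothesis $\dim X_i \leq n+i-l$ gives
\[
	\dim I_i \leq (n+i-l) + (n-i-1) = 2n-l-1
\]
for every $i$, the index $i$ cancelling. Hence $\dim I \leq 2n-l-1 < 2n-l = \dim \PP^{2n-l}$, and the projection argument above completes the proof.

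The step that requires the most care, and which I expect to be the main obstacle, is the bookkeeping that makes $i$ cancel: one must match the jump in fiber dimension of $\PP(\ker D_x(p_l|_X))$ as the rank of $p_l|_X$ drops against the bound on $\dim X_i$ furnished by the hypothesis, and verify that the constant-rank condition on each stratum genuinely makes $I_i$ a projective bundle of the asserted dimension. The conceptual key is the identification $\PP^{2n-l} = \PP(\ker p_l)$, which converts the tangency condition $v \in \PP(T_x X)$ into $v \in \PP(\ker D_x(p_l|_X))$ and thereby ties the quantitative hypothesis on $p_l|_X$ to the conclusion about generic $p_v$; one should also note that $I$ is a bona fide algebraic variety (being closed in $X \times \PP^{2n-l}$) so that the inequality $\dim\overline{\,\text{image}\,} \leq \dim I$ may be applied.
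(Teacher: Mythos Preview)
Your proof is correct and follows essentially the same approach as the paper: both reduce to the tangency criterion $v \subseteq T_x X$, both bound the dimension of the projectivized kernel of $D(p_l|_X)$ by stratifying over the rank loci $X_i$, and both arrive at the same arithmetic $(n+i-l)+(n-i-1)=2n-l-1$. The only difference is notational---the paper works with the bundle $\PP V \to X$ and its map $\eta$ to $\PP^{2n}$, while you package the same data as the incidence variety $I \subseteq X \times \PP^{2n-l}$.
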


\begin{Remark}\label{rem:immersion}
In particular, $p_v |_X$
is an immersion for generic $v \in \PP^{2n}$.
\end{Remark}

 \begin{proof}[Proof of Lemma~\ref{lemma:goodisgenericimmersion}]
 Let $v \in \PP^{2n}$.
 We first note that $p_v |_X$ is an immersion if and only if
 $v\not\subseteq T_x X$ for all $x\in X$. Let $\P TX$ be the projectivization of
 the tangent bundle $TX$, which has ~\cite{Sri_91}dimension $2n-1$. Consider the map
 \[
 	\eta \colon\P TX \to \P^{2n} \quad \textrm{where $\eta(v) \in \PP^{2n}$
	is the line $v \subset T_x X$ viewed in $\CC^{2n+1}$} \, .
 \]
 Now, for $v \in \PP^{2n}$ we clearly have that
 $v \not \subseteq T_x X$ for all $x\in X$
 if and only if $v$ lies in the complement of $\eta(\PP TX)$.

 Let $V \subseteq TX$ be the kernel of
 the differential $D(p_l |_X) \colon TX \to T \CC^l$.
 Clearly, we have
 $\eta(\PP V) \subset \PP^{2n-l}$.
 It follows for $v \in \PP^{2n-l}$ that $p_v |_X$ is an immersion
 if and only if $v \not\in \eta(\PP V)$.
 Therefore, it is enough to show that $\dim \PP V < 2n-l$.

 For all $x \in X_i = X_i(p_l |_X)$, the fiber
 $V_x = \ker D_x(p_l |_X)$ has dimension $n - i$.
 In particular, $\PP V |_{X_n}$ is empty.
 For all $0 \leq i < n$, we have
 \[
 	\dim V |_{X_i} \leq \max_{x \in X_i} \dim V_x
	+ \dim X_i \leq (n-i) + (n + i - l) = 2n-l \, ,
 \]
 which implies $\dim \PP V |_{X_i} < 2n-l$. Hence,
 $\dim \PP V < 2n-l$ and the result follows.
 \end{proof}

\subsection{2-transversal maps}
	Let $X \subset \CC^{2n+1}$ be a smooth $n$-dimensional
	closed subvariety.
	
	\begin{lemma}\label{lemma:doublepointsllessn}
		Let $0 \leq l < n$. If $p_l|X$ is transversal, then
		$p_v |_X$ is 2-transversal
		for generic $v \in \PP^{2n-l}\subseteq \PP^{2n}$.
	\end{lemma}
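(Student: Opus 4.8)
The statement splits along the definition of $2$-transversality: I must show that for generic $v \in \PP^{2n-l}$ the map $p_v|_X$ is transversal and that each of its fibers has at most two elements. The first property is immediate from the hypothesis: since $p_l|_X$ is transversal, Lemma~\ref{lemma:transversalmultipointsdescent} already gives that $p_v|_X$ is transversal for generic $v \in \PP^{2n-l}$. So the whole content lies in the fiber-size bound, and the plan is to show that the set of $v \in \PP^{2n-l}$ for which some fiber of $p_v|_X$ contains three distinct points is not Zariski dense. Geometrically a fiber of $p_v|_X$ is the intersection of $X$ with an affine line in direction $v$, so a fiber with three points is exactly a line in direction $v$ meeting $X$ in at least three points, i.e.\ a trisecant of $X$ with direction $v$.

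To bound such directions I would introduce the incidence variety
\[
	T = \{\, (x_1,x_2,x_3,v) \in X^3 \times \PP^{2n-l} \mid
	x_1,x_2,x_3 \text{ distinct},\ x_2 - x_1 \in v,\ x_3 - x_1 \in v \,\},
\]
so that $p_v|_X$ has a fiber with three points precisely when $v$ lies in the image of the projection $\mathrm{pr}\colon T \to \PP^{2n-l}$. Since three distinct collinear points determine the direction of their line, $\mathrm{pr}$ factors through the forgetful map $T \to X^3$, whose image is the variety of collinear triples of $X$ with direction in $\PP^{2n-l}$; note any such triple satisfies $p_l(x_1)=p_l(x_2)=p_l(x_3)$. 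It therefore suffices to prove $\dim \overline{\mathrm{pr}(T)} < 2n-l = \dim\PP^{2n-l}$, since then generic $v$ admits no triple point; passing from this dimension estimate to genericity of $v$ is handled by the algebraic Sard statement of Lemma~\ref{lem:GenericSurj}.

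To estimate the dimension I would fiber $T$ over the double-point locus $D=\{(x_1,x_2)\in X^2 \mid x_1\neq x_2,\ p_l(x_1)=p_l(x_2)\}$ by forgetting $x_3$. Here the transversality hypothesis on $p_l|_X$ enters: it says that $0$ is a regular value of $(x_1,x_2)\mapsto p_l(x_2)-p_l(x_1)$ on $X^2\setminus\Delta$, so by the regular-value computation in the proof of Lemma~\ref{transkey.lem} the locus $D$ is smooth of pure dimension $2n-l$. The fiber of $T\to D$ over $(x_1,x_2)$ is $(X\cap\ell_{x_1,x_2})\setminus\{x_1,x_2\}$, with $\ell_{x_1,x_2}$ the line through $x_1$ and $x_2$. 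Counting directly on $X^3$ (dimension $3n$), collinearity of three points in $\CC^{2n+1}$ is a codimension-$2n$ condition and cutting the direction down to $\PP(\ker p_l)=\PP^{2n-l}$ adds codimension $l$, so the \emph{expected} dimension of $T$ is $3n-(2n+l)=n-l$. Because the hypothesis $l<n$ gives $n-l<2n-l$, this would force $\mathrm{pr}$ to be non-dominant and finish the proof.

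The main obstacle is precisely the control of these secant lines, i.e.\ whether this intersection actually has its expected dimension. A priori a secant line could meet $X$ in a third point — or even lie in $X$ — along a positive-dimensional family of double points, which pushes $\dim T$ above $n-l$; already the plane cubic in $\CC^3$ projected to $\CC^2$ shows that $\dim T$ can reach $2n-l$. The crucial point is that even in these degenerate situations the \emph{directions} of the trisecants remain confined to a subvariety of dimension at most $n$, so that the target inequality $\dim\overline{\mathrm{pr}(T)}\le n<2n-l$ still survives. Making this uniform — showing that the trisecant configurations of a smooth variety with $2\dim X+1\le 2n+1$ never yield a dominant family of projection directions — is the heart of the argument; I would isolate the degenerate sub-loci (secant lines contained in $X$, and the components of $D$ over which a third intersection point persists) and bound them separately by the same incidence-and-dimension bookkeeping, using the smoothness of $X$ together with the regular-value description of $D$ coming from the transversality of $p_l|_X$.
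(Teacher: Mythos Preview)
Your setup is right and matches the paper's: transversality of $p_v|_X$ comes from Lemma~\ref{lemma:transversalmultipointsdescent}, and the fiber bound amounts to showing that the trisecant directions of $X$ inside $\PP^{2n-l}$ form a non-dominant set. Your incidence variety $T$ is, up to bookkeeping, the paper's $\overline{\chi}^{-1}(\Delta)$ restricted to directions in $\PP^{2n-l}$, and the target estimate $\dim(\textrm{image in }\PP^{2n-l})\le n$ is exactly what the paper establishes. So the geometry is correctly identified.

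The gap is in how you propose to obtain that estimate. Your expected-dimension count $3n-(2n+l)=n-l$ is not a proof, and you correctly flag that it can fail; but the fallback plan --- ``isolate the degenerate sub-loci and bound them separately by the same incidence-and-dimension bookkeeping'' --- is not an argument, it is a hope. Nothing in your outline rules out a component of collinear triples over which the direction map is dominant onto $\PP^{2n-l}$; the smoothness of $X$ and the regular-value description of $D$ only control $\dim D$, not the fibers of $T\to D$, and the cubic example you cite already shows $\dim T$ can jump. The claim ``the directions of the trisecants remain confined to a subvariety of dimension at most $n$'' is precisely the lemma, stated without proof.

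The paper closes this gap differently, and you should see the idea because it avoids all case analysis. Rather than bounding $\dim T$ by stratification, it shows that the map
\[
\overline{\chi}\colon (x,y,z)\longmapsto \bigl([y-x],[z-x]\bigr)\in \PP^{2n}\times\PP^{2n}
\]
is \emph{transversal to the diagonal} $\Delta$ at every point $(v,v)$ with $v\in U$, where $U\subseteq\PP^{2n-l}$ is the open set on which $p_v|_X$ is transversal. This is a purely local differential statement (Lemma~\ref{lemma:lengthycalc}): after an affine change sending $y-x$ and $z-x$ to the first coordinate axis, the differential of $\overline{\chi}$ at $(x,y,z)$ is expressed through $D_xh,D_yh,D_zh$ for a suitable projection $h\colon X\to\CC^{2n}$, and surjectivity onto $T\PP^{2n}\times T\PP^{2n}$ modulo $T\Delta$ reduces to the surjectivity of $D_yh-D_xh$ and $D_zh-D_xh$, which is exactly transversality of $(x,y)$ and $(x,z)$ as double points of $p_v|_X$. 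Transversality to $\Delta$ then forces (Lemma~\ref{lemma:transversality}) the local dimension of $\im\overline{\chi}\cap\Delta$ at such points to be at most $3n+2n-4n=n$, hence strictly less than $2n-l$ since $l<n$. This is the missing mechanism: the hypothesis on $p_l|_X$ is used, via Lemma~\ref{lemma:transversalmultipointsdescent}, to produce transversality of $p_v|_X$ for generic $v\in\PP^{2n-l}$, and \emph{that} transversality --- not merely the dimension of $D$ --- is what drives the trisecant bound.
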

		
	\begin{lemma}\label{lemma:doublepointslbiggern}
		Let $n \leq l \leq 2n-1$ and
		assume that $p_l |_X$ is proper and transversal.
		Then there exists an algebraic automorphism
		$\varphi$ of $\CC^{2n+1}$ with the following properties:
		it fixes the last $l$ coordinates and, for  generic
		$v$ in $\PP^{2n-l}\subseteq \PP^{2n}$,
		$p_v \circ \varphi |_X$ is 2-transversal.
	\end{lemma}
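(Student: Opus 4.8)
The plan is to separate the two requirements in $2$-transversality. Transversality of $p_v\circ\varphi|_X$ will come essentially for free: since $\varphi$ fixes the last $l$ coordinates we have $p_l\circ\varphi=p_l$, so $p_l|_{\varphi(X)}$ is transversal (it differs from the transversal map $p_l|_X$ only by the isomorphism $\varphi^{-1}$), and applying Lemma~\ref{lemma:transversalmultipointsdescent} to the subvariety $\varphi(X)$ then shows that $p_v\circ\varphi|_X$ is transversal for generic $v\in\PP^{2n-l}$, for \emph{any} algebraic automorphism $\varphi$ fixing the last $l$ coordinates. The real content is therefore to choose $\varphi$ so that, for generic $v$, no fibre of $p_v\circ\varphi|_X$ has three or more points. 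I would first record that properness forces finite fibres: a positive-dimensional fibre of $p_l|_X$ would be a positive-dimensional closed, hence compact, subvariety of $\CC^{2n+1}$, which is impossible; and since $p_l$ factors through $p_v$, all fibres of $p_v|_X$ are finite as well.

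Next I would translate ``no triple point'' into a statement about collinearity. If $v\in\PP^{2n-l}$ then $v\subseteq\ker p_l$, so three distinct points of $\varphi(X)$ in a common fibre of $p_v$ share their last $l$ coordinates and lie on one affine line in direction $v$. Writing $\CC^{2n+1}=\CC^{N}\times\CC^{l}$ with $N=2n+1-l$ and $p_l$ the projection to the second factor, a triple point of $p_v\circ\varphi|_X$ is exactly a triple $(x_1,x_2,x_3)$ of distinct points of $X$ in one fibre of $p_l$ whose images $\varphi(x_1),\varphi(x_2),\varphi(x_3)$ are collinear, and then $v=[\varphi(x_2)-\varphi(x_1)]$. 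Let $W\subseteq X^3$ be the locally closed set of triples of distinct points in a common fibre. Because $p_l|_X$ is transversal, its double-point set $D$ is a transverse preimage of the diagonal under $(p_l|_X)\times(p_l|_X)$ and hence is smooth of dimension $2n-l$; as the fibres are finite, the projection $W\to p_l(X)$ is finite and $\dim W\le\dim p_l(D)=2n-l$. On the other hand, collinearity of three points in $\CC^N$ is a condition of codimension $N-1=2n-l$.

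These two numbers matching is exactly the difficulty that $\varphi$ must resolve, and it explains why no $\varphi$ is needed in Lemma~\ref{lemma:doublepointsllessn}. I would introduce a finite-dimensional algebraic family $\{\varphi_t\}_{t\in T}$ of automorphisms of $\CC^{2n+1}$, each fixing the last $l$ coordinates, that is rich enough that for every triple of distinct points sharing their last $l$ coordinates the evaluation map $T\to(\CC^N)^3$ recording the $\CC^N$-parts of the three images is dominant; such a family can be assembled from algebraic shears and linear maps via the jet-prescription technique of Lemma~\ref{lem:prescribedjets}. Forming the incidence variety $\mathcal C=\{(w,t)\in W\times T : \varphi_t\text{ makes the triple }w\text{ collinear}\}$, dominance of the evaluation map forces collinearity to cut out codimension $2n-l$ in each fibre over $W$, whence $\dim\mathcal C\le\dim W+\dim T-(2n-l)\le\dim T$. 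By the algebraic Sard statement of Lemma~\ref{lem:GenericSurj}, the generic fibre of $\mathcal C\to T$ is finite; fixing such a generic $t$ (also lying in the transversality locus) and setting $\varphi=\varphi_t$, only finitely many triples of $\varphi(X)$ are collinear, so only finitely many directions $v\in\PP^{2n-l}$ carry a triple point. For $v$ outside this finite set and inside the generic transversal locus, $p_v\circ\varphi|_X$ is transversal with every fibre of size at most two, i.e.\ $2$-transversal.

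The hard part will be the construction and analysis of the family $T$: one must produce automorphisms that fix the last $l$ coordinates yet can move any three distinct points of a common fibre into arbitrary position, with the evaluation map a submersion uniformly over the whole $(2n-l)$-dimensional family $W$. Crucially these automorphisms cannot be linear, since linear maps preserve collinearity and would never break an existing alignment; genuinely nonlinear shears are required, and keeping the family finite-dimensional while guaranteeing surjectivity of its differential at the relevant points is the delicate step. Everything else reduces to the dimension count above and an application of generic smoothness.
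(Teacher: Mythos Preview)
Your proposal is largely sound, and it shares the paper's first two moves: transversality of $p_v\circ\varphi|_X$ for generic $v\in\PP^{2n-l}$ is obtained exactly as you say via Lemma~\ref{lemma:transversalmultipointsdescent}, and the paper also bounds the triple-point locus $W$ (in its notation, $\overline{\chi}^{-1}(\PP^{2n-l}\times\PP^{2n-l})$) by $2n-l$ using transversality and finiteness of the fibres of $p_l|_X$.

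Where you diverge is in how the triple points are eliminated. The paper does \emph{not} introduce a family of automorphisms or an incidence-variety dimension count. It simply observes that $W$ has finitely many irreducible components $V_1,\ldots,V_r$, each of dimension $\le 2n-l$, and that a \emph{single} algebraic shear fixing the last $l$ coordinates can be chosen so that $\overline{\chi}(V_i)$ is not contained in the diagonal $\Delta_{2n-l}\subset\PP^{2n-l}\times\PP^{2n-l}$ for any $i$ (it suffices to break collinearity of one triple per component, which a generic shear in the first $2n+1-l\ge 2$ coordinates does). Then each $\overline{\overline{\chi}(V_i)}$ is an irreducible closed set of dimension $\le 2n-l$ not contained in $\Delta_{2n-l}$, hence meets $\Delta_{2n-l}$ in a proper closed subset; the complement of the union is the desired open set of good $v$.

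Your route---a parametrized family $T$, a collinearity incidence $\mathcal C\subset W\times T$, and a fibre-dimension argument---would also work, and it has the conceptual appeal of being a standard ``make things generic in a family'' manoeuvre. But it buys that uniformity at real cost: you must produce a finite-dimensional $T$ whose evaluation map $T\to(\CC^N)^3$ is a \emph{submersion} at every triple in $W$ (mere dominance does not give the codimension bound you use), and you yourself flag this as the delicate step. The paper's argument sidesteps it entirely: because $W$ has only finitely many components and each has dimension at most $\dim\Delta_{2n-l}$, you only need to push one point of each component off the diagonal, not to control the whole incidence. Two minor remarks: your appeal to Lemma~\ref{lem:prescribedjets} is misplaced (that lemma prescribes $1$-jets at fixed points, not positions of points), and the finiteness of the generic fibre of $\mathcal C\to T$ follows from the fibre-dimension theorem rather than from Lemma~\ref{lem:GenericSurj}.
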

	
	\begin{remark}
		The conclusion of
		Lemma~\ref{lemma:doublepointslbiggern} is in general false if
		we choose $\varphi = \id$. This can be seen by the
		following simple example:
		Let $(x, y, z)$ be a coordinate system of $\CC^3$ and let
		$X \subset \CC^{3}$ be the union of three disjoint copies
		of the line $\CC$ given by
		$\{ z + y = 0, x = 1 \}$,
		$\{ x = y = 0 \}$ and $\{ z - y = 0, x = -1 \}$. Let
		$p_1 \colon \CC^3 \to \CC$ be given by
		$p_1(x, y, z) = z$.
		Thus, $p_1 |_X \colon X \to \CC$ is a trivial covering and in
		particular it is proper and transversal.
		However, $p_v |_X$
		is 2-transversal if and only if
		$v = (0: 1: 0) \in \PP^1 \subset \PP^2$.
	\end{remark}
	
	For the proof of Lemma~\ref{lemma:doublepointsllessn} and
	Lemma~\ref{lemma:doublepointslbiggern}, we consider the map
	\[
		\chi \colon X^3 \to \CC^{2n+1} \times \CC^{2n+1} \, , \
		(x, y, z) \mapsto (y-x, z-x)
	\]
	and its composition with $\pi \times \pi$
	\[
		\overline{\chi} = (\pi \times \pi) \circ \chi
		\colon
		\{ \, (x,y,z) \in X^3 \ | \ x \neq y \neq z \neq x \, \} \to
		\PP^{2n} \times \PP^{2n}
	\]
	where
	$\pi \colon \CC^{2n+1} \setminus \{ 0 \} \to \PP^{2n}$
	is the canonical projection.
	For $v \in \PP^{2n}$, note that all fibers of $p_v |_X$
	have at most two points if and only if $(v, v) \not\in \im(\overline{\chi})$.

\begin{proof}[Proof of Lemma~\ref{lemma:doublepointsllessn}]
By Lemma~\ref{lemma:transversalmultipointsdescent} there is
a non-empty open set of directions
$U$ in $\P^{2n-l}$ such that $p_v |_X$ is transversal for $v$ in $U$.
We calculate that {at} all points of $U \times U$ the map
$\overline{\chi}$ is transversal to the diagonal $\Delta$ of
$\PP^{2n} \times \PP^{2n}$;
this lengthy calculation is provided in the Appendix,
see Lemma~\ref{lemma:lengthycalc}.
Hence, {at} every point of $U \times U$ the intersection
$\im\overline{\chi} \cap \Delta$ has local dimension
less than or equal to $3n+2n-4n=n$ (see Lemma~\ref{lemma:transversality}
in the Appendix). This shows that $p_v |_X$ is
2-transversal for generic $v$ in $\P^{2n-l}$ since $0 \leq l < n$.
\end{proof}

 	\begin{proof}[Proof of Lemma~\ref{lemma:doublepointslbiggern}]		
		{
 		We claim, that $\overline{\chi}^{-1}(\PP^{2n-l} \times \PP^{2n-l})$
 		has dimension $\leq 2n-l$.	 Let $q_l \colon X \to \CC^l$
		be the restriction of $p_l \colon \CC^{2n+1} \to \CC^l$ to $X$.
		Recall, that
		$\psi_{q_l} \colon X^2 \to \CC^l$ is defined as
		$\psi_{q_l}(x, y) = q_l(y)-q_l(x)$ and consider the map
 		\[
 			\pr \colon \overline{\chi}^{-1}(\PP^{2n-l} \times \PP^{2n-l})
 			\to (\psi_{q_l})^{-1}(0) \setminus \Delta_X \, ,
			\ (x, y, z) \mapsto (x, y)
 		\]
		where $\Delta_X$ denotes the diagonal in $X \times X$.
 		As $q_l$ is transversal, it follows
 		that $0$ is a regular value of $\psi_{q_l} |_{X^2 \setminus \Delta_X}$
		and thus we get
 		$\dim (\psi_{q_l})^{-1}(0) \setminus \Delta_X \leq 2n - l$.
 		Every fiber $\pr^{-1}(x, y) = \{ (x, y, z) |
 		q_l(z) = q_l(x) \}$ is finite, since $q_l$ is proper.
 		Hence, the claim follows.
		
		Now, let $V_1, \ldots, V_r
		\subseteq \overline{\chi}^{-1}(\PP^{2n-l} \times \PP^{2n-l})$
		be the irreducible components. After applying an algebraic shear
		of $\CC^{2n+1}$ that fixes the last $l$ coordinates, we can assume
		for all $i$,
		that $\overline{\chi}(V_i)$ has points outside the diagonal
		$\Delta_{2n-l}$ in $\PP^{2n-l} \times \PP^{2n-l}$. In summary
 		\[
 			\Delta_{2n-l} \setminus
 			\overline{\overline{\chi}
			(\overline{\chi}^{-1}(\PP^{2n-l} \times \PP^{2n-l}))}
 		\]
 		is a non-empty open subset of $\Delta_{2n-l}$.
		Therefore, for generic $v\in\P^{2n-l}$, all fibers of $p_v|_X$ contain 
		at most two points. Since $p_l|_X$ is transversal,  $p_v |_X$ 
		is transversal for generic $v$ in $\P^{2n-l}$, 
		by Lemma~\ref{lemma:transversalmultipointsdescent}. 
		This implies the statement.}
 	\end{proof}

\section{Proof of the main theorem}\label{sec:proof} In this section
we prove that any two algebraic embeddings $f,g$ of a smooth
$n$-dimensional variety
$X$ in $\C^{2n+1}$ are holomorphically equivalent.

As outlined in Section~\ref{sec:strategy} we construct inductively
embeddings $f=f_0$, $f_1$, $\ldots, f_{2n}$, $f_{2n+1}=g$
interpolating $f$ and $g$.
The embedding $f_l$ is constructed to be
holomorphically equivalent to $f_{l-1}$ and to have the property
that the last $l$ coordinates are $r_l\circ g$ for some
linear projection $r_l\colon \C^{2n+1}\to \C^l$. To define these projections
$r_l$, we will choose a flag
$G= (0 \subset W_1 \subset \cdots \subset W_{2n} \subset \CC^{2n+1})$
of $\CC^{2n+1}$ and set
\[
    r_l = p_{W_{2n+1-l}}
    \colon \C^{2n+1}\to \C^{2n+1}/ W_{2n+1-l} {=} \C^l \, .
\]
Afterwards we construct the $f_l$ inductively changing $G$ generically in the
process.

In a first subsection we define the flag $G$.
Then we construct $f_1$ and prove the holomorphic
equivalence of $f_1$ and $f$. In the next subsection we
provide the inductive construction of $f_l$ using
$G$ and we prove that $f_1, \ldots, f_{2n}$
are holomorphically equivalent. In the last subsection, we prove the holomorphic
equivalence of $f_{2n}$ and $g$.

\subsection{The flag $G$.}
By Corollary~\ref{corollary:genericflags},
we can assume that there exists a non-empty
open set $O_{trans}$ in the flag variety $\F_{2n+1}$
such that $p_{W_1} \circ g$, \ldots,
$p_{W_{2n} \circ g}$ are transversal
for all flags
$(0 \subset W_1 \subset \cdots \subset W_{2n}�\subset \CC^{2n+1})$
in $O_{trans}$.

By Lemma~\ref{lemma:doublepointsllessn}, there exists a non-empty open
set $O_{2-trans} \subseteq \PP^{2n}$ with the property that
for all $w \in O_{2-trans}$ the map $p_w \circ g$ is 2-transversal.

Lemma~\ref{lemma:baselproper} provides the existence of a non-empty open
set $O_{prop}\subseteq Gr_{n+1}^{2n+1}$ with the property that for all $W$ in
$O_{prop}$ the map $p_W \circ g$ is proper.

For all $1\leq k\leq 2n$, Lemma~\ref{lemma:baselimmersion} provides
the existence of an open set $O_{imm, k} \subset \Gr_k^{2n+1}$
with the property that for all $W$ in $O_{imm, k}$
we have that $\dim X_i(p_W\circ g)\leq k+i-n-1$  for all $0 \leq i < n$.

We choose a flag
$G=(0\subset W_1\subset\cdots\subset W_{2n}\subset \C^{2n+1})$ such that
\begin{enumerate}
	\item \label{en:regular}the flag $G$ is in $O_{trans}$,
	\item \label{en:2-transversal} the $1$-dimensional subspace $W_1$
	is an element of $O_{2-trans}$,
	\item \label{en:proper} the $(n+1)$-dimensional subspace
	$W_{n+1}$ is an element of $O_{prop}$,
	\item \label{en:immersion} for all $1\leq k\leq 2n$,
	the $k$-dimensional subspace $W_k$ is an element of $O_{imm, k}$.
\end{enumerate}

Note that a generic flag in
$F_{2n+1}$ satisfies properties~\eqref{en:regular}-\eqref{en:immersion}.

\subsection{Construction of $f_1$}

We choose $v_1\in \P^{2n}$ such that
$p_{v_1}\colon \C^{2n+1}\to\C^{2n+1} / v_1$ is a good projection for $f$,
which is possible by {Corollary~\ref{corollary:baselproper}} (properness),
Remark~\ref{rem:immersion} (immersion) and
Lemma~\ref{lemma:doublepointsllessn} (2-trans\-versality).
Changing $G$ generically we
can assume that $r_1\circ g(x)\neq r_1\circ g(y)$ for all double points
$(x, y)$ of $p_{v_1}\circ f$. Hence, the map
\[	
	f_1=(p_{v_1}\circ f,r_1\circ g)\colon X\to
	\C^{2n+1} / v_1\times \C^{2n+1} / W_{2n}{=} \C^{2n}\times \C
\]
is an algebraic embedding, which is holomorphically equivalent to $f_0 = f$
by Proposition~\ref{prop:kalitrick}.

\subsection{Construction of $f_2, \ldots, f_{2n}$}

Let $1 \leq l \leq 2n-1$ and assume $f_{l}\colon X\to \C^{2n+1-l}\times \C^l$
is defined and the last $l$ coordinate functions equal $r_l \circ g$.
Let us now inductively define the embedding
$f_{l+1}$ changing the flag $G$ in the process and let us prove
the holomorphic equivalence of $f_{l+1}$ and  $f_{l}$ for $1 \leq l \leq 2n-1$.

We try to find $v_{l+1}\in \P^{2n-l}$ such that the projection
\[
	p_{v_{l+1}} \colon \C^{2n+1} \to
	\C^{2n+1} / v_{l+1} {=} \CC^{2n}
\]
is a good projection for $f_l$.
We claim that this is possible if we compose $f_l$ with some
algebraic automorphism of $\C^{2n+1}$ that fixes the last $l$
coordinate functions of $f_l$. Assuming this claim,
one can change $G = (0 \subset W_1 \subset \ldots \subset W_{2n}
\subset \CC^{2n+1})$ generically such that the subspaces
$W_{2n+1-l} \subset \cdots \subset W_{2n} \subset \CC^{2n+1}$ do not change
(i.e.\ $r_1, \ldots, r_l$ are still the same),
and $r_{l+1}\circ g$ separates the double points of $p_{v_{l+1}} \circ f_l$.
Let $e \colon \CC^{2n+1} \to \CC$ be a surjective linear function
such that $r_{l+1}$ equals
$(r_l, e) \colon \CC^{2n+1} \to \CC^l \times \CC = \CC^{l+1}$.
Now, Proposition~\ref{prop:kalitrick} shows that $f_l$ is holomorphically
equivalent to
\[
	f_{l+1}=(p_{v_{l+1}}\circ f_l, e \circ g)\colon X\to \C^{2n}\times \C.
\]
As by construction the last $l$ coordinate functions
of $f_l$ and $p_{v_{l+1}} \circ f_l$ are the same, it follows that
the last $l+1$ coordinate functions of $f_{l+1}$ equal $r_{l+1} \circ g$.

Let us prove the existence of $v_{l+1} \in \PP^{2n-l}$. In fact, we prove that
there exists an algebraic automorphism $\varphi$ of $\CC^{2n+1}$ that
fixes the last $l$ coordinates, with the following property:
for generic $v_{l+1} \in \PP^{2n-l}$ the map
$p_{v_{l+1}}$ is a good projection for
$\varphi \circ f_l$, i.e.\ it is 2-transversal, proper and immersive.

{\bf 2-transversal:}
If $l<n$, then for generic $v_{l+1} \in \PP^{2n-l}$
the map $p_{v_{l+1}} \circ f_l$ is $2$-transversal by
Lemma~\ref{lemma:doublepointsllessn}.
If $l\geq n$, then, by applying Lemma~\ref{lemma:doublepointslbiggern} to
the embedding $f_l$
(properties~\eqref{en:regular} and~\eqref{en:proper}
of $G$ imply that the assumptions of Lemma~\ref{lemma:doublepointslbiggern}
are satisfied), we find an automorphism $\varphi$ of $\C^{2n+1}$
with the following properties: $\varphi$ fixes the last
$l$ coordinates and after replacing $f_l$ by $\varphi\circ f_l$ 
the map $p_{v_{l+1}} \circ f_l$ is $2$-transversal, for
generic $v_{l+1}\in \PP^{2n-l}$.

{\bf Proper:}
If $l\leq n$, Corollary~\ref{corollary:baselproper} implies that
$p_{v_{l+1}} \circ f_l$ is proper for generic $v_{l+1} \in \PP^{2n-l}$.
If $l > n$, then $r_n \circ g$ is proper by~\eqref{en:proper}; and so of course,
$p_{v_{l+1}} \circ f_l$ is proper for all $v_{l+1} \in \PP^{2n-l}$.

{\bf Immersive:}
By applying Lemma~\ref{lemma:goodisgenericimmersion} to the embedding
$f_l$ (the assumptions of Lemma~\ref{lemma:goodisgenericimmersion}
are satisfied by~\eqref{en:immersion}),
we see that the map $p_{v_{l+1}} \circ f_l$ is immersive, 
for generic $v_{l+1}\in \PP^{2n-l}$.

\subsection{Holomorphic equivalence of $f_{2n}$ and $g$}
By construction, the last $2n$ coordinate functions of
$f_{2n}$ equal $r_{2n} \circ g$. The map $r_{2n} \circ g$ is immersive, proper
and 2-transversal
due to properties \eqref{en:immersion},
\eqref{en:proper} and \eqref{en:2-transversal} of $G$, respectively.
Thus, $f_{2n}$
and $g$ are holomorphically equivalent by Proposition~\ref{prop:kalitrick}.

\appendix

\section*{Appendix}
\renewcommand{\thesection}{A}

\begin{lemma}
	\label{lem:equidim}
	Let $X$ be a smooth variety such that every connected
	component has dimension $\leq n$ and let
	$f \colon X \to \CC^{2n+1}$ be an algebraic embedding.
	For $0 \leq i \leq n$, denote by $X_i$ the set of points $x$ in $X$
	such that $X$ is $i$-dimensional at $x$. Then there exists
	an algebraic embedding
	\[
		F \colon \coprod_{i=0}^n X_i \times \CC^{n-i}
		\to \CC^{2n+1}
	\]
	that restricts on $\coprod_{i=0}^n X_i \times \{ 0 \} = X$ to $f$.
\end{lemma}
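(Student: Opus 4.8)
The plan is to realize the target $\coprod_{i=0}^n X_i \times \CC^{n-i}$ as a single smooth affine variety of pure dimension $n$ and to extend $f$ across it by first embedding into a higher-dimensional affine space and then projecting back down by a generic linear map that is the identity in the $\CC^{2n+1}$-directions.

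Write $Y = \coprod_{i=0}^n X_i \times \CC^{n-i}$. Each $X_i$ is a union of connected components of $X$, hence a smooth closed affine subvariety of dimension $i$, so $Y$ is a smooth affine variety all of whose components have dimension exactly $n$, and $X = \coprod_i X_i \times \{0\}$ sits in $Y$ as a closed subvariety (the union of the zero sections), with $f$ a closed embedding of $X$. Since $2 \dim Y + 1 = 2n+1$, the statement reduces to extending the closed embedding $f \colon X \hookrightarrow \CC^{2n+1}$ to a closed embedding $F \colon Y \hookrightarrow \CC^{2n+1}$ with $F|_X = f$.

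First I would embed $Y$ into a large affine space in a controlled way. As $Y$ is affine, choose a closed embedding $\iota = (\iota_1, \ldots, \iota_N) \colon Y \hookrightarrow \CC^N$ and an extension $\hat f \colon Y \to \CC^{2n+1}$ of $f$ (possible since $X$ is closed in the affine variety $Y$). Because $f$ is a closed embedding, its coordinate functions generate $\mathcal{O}(X)$, so each restriction $\iota_b|_X$ equals a polynomial $q_b(f_1, \ldots, f_{2n+1})$; set $m_b = \iota_b - q_b(\hat f_1, \ldots, \hat f_{2n+1})$, so that $m_b|_X = 0$. Then
\[
	\Phi = (\hat f, m_1, \ldots, m_N) \colon Y \to \CC^{2n+1} \times \CC^N
\]
is a closed embedding with $\Phi|_X = (f, 0)$: its coordinate functions generate $\mathcal{O}(Y)$, since from $\hat f_c$ and $m_b$ one recovers $\iota_b = m_b + q_b(\hat f)$, and the $\iota_b$ already generate $\mathcal{O}(Y)$.

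Finally I would project back to $\CC^{2n+1}$ by a linear map that is the identity on $\CC^{2n+1} \times \{0\}$: for a linear map $A \colon \CC^N \to \CC^{2n+1}$ put $P_A(u, w) = u + A w$, and set $F = P_A \circ \Phi$. Then $F|_X = f$ for every $A$, because the $m_b$ vanish on $X$, so it remains to choose $A$ making $F$ a closed embedding, and I claim a generic $A$ works. Injectivity and immersivity follow from the standard generic-projection count: as $\Phi$ is injective, a failure at a pair $p \neq p'$ forces the difference $\delta = (\delta_1, \delta_2) = p - p'$ to satisfy $A \delta_2 = -\delta_1$ with $\delta_2 \neq 0$ (the case $\delta_2 = 0$ gives $\delta = 0$), and since such pairs form a family of dimension $\leq 2n$ while each imposes $2n+1$ independent conditions on $A$, the bad locus has dimension $< \dim \mathrm{Hom}(\CC^N, \CC^{2n+1})$; the immersion condition is handled identically on projectivized tangent spaces. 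The real content is properness: $\Phi(Y) \subset \CC^{2n+1+N}$ is closed of dimension $n$, and $\ker P_A$ ranges exactly over the big cell of $N$-planes transverse to $\CC^{2n+1} \times \{0\}$, an open dense subset of $\Gr^{2n+1+N}_N$. By Lemma~\ref{lemma:baselproper} the bad set $\omega_N(\Phi(Y))$ has dimension strictly less than $\dim \Gr^{2n+1+N}_N$, so a generic $A$ has $\ker P_A \notin \omega_N(\Phi(Y))$ and hence $P_A|_{\Phi(Y)}$ is proper. Intersecting the three dense conditions yields an $A$ for which $F = P_A \circ \Phi$ is the desired closed embedding. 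The main obstacle is coordinating these three genericity requirements over the restricted family $\{P_A\}$, and in particular checking that this family still meets the proper locus supplied by Lemma~\ref{lemma:baselproper}.
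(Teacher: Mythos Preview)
Your argument is correct and complete; the three genericity conditions really do intersect on the affine cell $\{\ker P_A : A \in \mathrm{Hom}(\CC^N,\CC^{2n+1})\}$, since $\omega_N(\Phi(Y))$ is a proper Zariski-closed subset of $\Gr^{2n+1+N}_N$ and your cell is open dense there, while the injectivity and immersivity bad loci are constructible of strictly smaller dimension by the counts you gave.

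The route, however, is quite different from the paper's. The paper argues inductively on the number of ``missing'' dimensions: it lets $X' \subset X$ be the locus where $\dim < n$, invokes Srinivas' theorem \cite[Theorem~1']{Sri_91} to see that for generic $v \in \PP^{2n}$ the projection $p_v \circ f|_{X'}$ is still a closed embedding, checks by a secant count that the same generic $v$ makes $p_v(f(X'))$ disjoint from $p_v(f(X_n))$, and concludes that $(x',t) \mapsto f(x') + t w$ (with $w$ spanning $v$) is a closed embedding of $X' \times \CC$ into $\CC^{2n+1}$ whose image misses $f(X_n)$. Iterating yields $F(x,t_1,\ldots,t_{n-i}) = f(x) + \sum_j t_j w_j$, affine-linear in the extra coordinates. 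Your approach instead packages $Y$ all at once, overshoots into $\CC^{2n+1+N}$ so that $X$ lands in $\CC^{2n+1}\times\{0\}$, and then projects back along a generic $N$-plane transverse to that factor. What you gain is self-containment (you use only the paper's own Lemma~\ref{lemma:baselproper}, not Srinivas' result) and a statement that works verbatim for any smooth affine $n$-dimensional $Y$ containing $X$ as a closed subvariety; what the paper gains is an explicit, affine-linear extension and a shorter proof that avoids the auxiliary embedding $\iota$ and the modification $m_b = \iota_b - q_b(\hat f)$.
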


\begin{proof}
	Let $X' \subseteq X$ be the set of points where $X$ is not $n$-dimensional.
	For generic $v \in \PP^{2n}$, the composition
	\[
		X' \stackrel{f |_{X'}}{\longrightarrow} \CC^{2n+1}
		\stackrel{p_v}{\longrightarrow} \CC^{2n+1} / v = \CC^{2n}
	\]
	is an embedding by \cite[Theorem~1']{Sri_91}. Since
	the image of the map
	\[
		X' \times X_n \to \PP^{2n} \, , \ (x', x) \mapsto [f(x') - f(x)]
	\]
	has dimension $\leq 2n-1$, for generic $v \in \PP^{2n}$,
	the two sets $p_v(f(X'))$ and $p_v(f(X_n))$ are disjoint in $\CC^{2n}$.
	In summary, there exists $w \in \CC^{2n+1}Ê\setminus \{ 0 \}$ such that
	\[
		X' \times \CC \to \CC^{2n+1} \, , \ (x', t) \mapsto f(x') + tw
	\]
	is a closed embedding and the image does not intersect $f(X_n)$.
	By a repeated application of this argument, we get our embedding $F$.
\end{proof}

\begin{lemma}
	\label{lem:prescribedjets}
	Let $m \ge 2$ be an integer, let $p_1, p_2 \in \CC^m$
	be distinct points and let $A_1, A_2 \in \SL_m$.
	Then there exists an algebraic automorphism $\varphi$ of $\CC^m$
	such that $\varphi(p_i) = p_i$ and $D_{p_i} \varphi = A_i$ for
	$i = 1, 2$. In fact, $\varphi$ can be chosen to be a composition of
	algebraic shears and linear maps.
\end{lemma}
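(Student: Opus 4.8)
The plan is to build $\varphi$ as a composition of shears by exploiting that $\SL_m$ is generated by elementary transvections, together with the observation that a single shear can install a prescribed transvection as its differential at $p_1$ while having trivial $1$-jet at $p_2$ (and symmetrically). First I would record the effect of a shear on $1$-jets. Given a surjective linear map $\ell \colon \CC^m \to \CC^{m-1}$ with $\ker \ell = \langle u \rangle$ and a polynomial $f \colon \CC^{m-1} \to \CC$, the map $\sigma(x) = x + f(\ell(x)) u$ is an algebraic shear with polynomial inverse, and at any point $p$ one computes $\sigma(p) = p + f(\ell(p)) u$ and $D_p \sigma = I + u \otimes (\ell^{*} df_{\ell(p)})$, a transvection in direction $u$.

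The key claim I would isolate is the following. Let $E_{ij} = e_i e_j^{*}$ with $i \neq j$, fix $\lambda \in \CC$, and suppose $p_1 - p_2$ is not a scalar multiple of $e_i$. Taking $\ell$ to be the projection forgetting the $i$-th coordinate, so $\ker \ell = \langle e_i \rangle$ and hence $\ell(p_1) \neq \ell(p_2)$, there is a shear $\sigma$ with $\sigma(p_1) = p_1$, $D_{p_1}\sigma = I + \lambda E_{ij}$ and $\sigma(p_2) = p_2$, $D_{p_2}\sigma = I$. Existence reduces to a Hermite interpolation problem for $f$ on $\CC^{m-1}$: one prescribes $f(\ell(p_1)) = 0$ and $df_{\ell(p_1)} = \lambda \bar\xi$, where $\bar\xi$ is the descent of the functional $\lambda e_j^{*}$ along $\ell$ (legitimate precisely because $j \neq i$ forces $e_j^{*}$ to vanish on $\langle e_i \rangle$), together with $f(\ell(p_2)) = 0$ and $df_{\ell(p_2)} = 0$. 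Since $\ell(p_1) \neq \ell(p_2)$ are distinct points of $\CC^{m-1}$, such a polynomial $f$ exists, and the role of $p_1$ and $p_2$ may of course be interchanged.

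With the claim in hand I would assemble $\varphi$ as follows. After conjugating the whole problem by a generic linear automorphism $L$ of $\CC^m$, which is permitted since it only replaces each $A_i$ by $L A_i L^{-1} \in \SL_m$, I may assume that $p_1 - p_2$ has all coordinates nonzero and is therefore parallel to no $e_i$. Using $m \geq 2$, I write $A_1$ and $A_2$ as products of off-diagonal elementary matrices $I + \lambda E_{ij}$. For each elementary factor of $A_1$ the claim furnishes a shear fixing both points, whose differential at $p_1$ is that factor and whose differential at $p_2$ is $I$; composing these shears yields an automorphism $\varphi_1$ fixing $p_1$ and $p_2$, and since every factor fixes both points the chain rule gives $D_{p_1}\varphi_1 = A_1$ and $D_{p_2}\varphi_1 = I$. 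Symmetrically I construct $\varphi_2$ with $D_{p_2}\varphi_2 = A_2$ and $D_{p_1}\varphi_2 = I$. Then $\varphi_2 \circ \varphi_1$ fixes both points with differentials $A_1$ at $p_1$ and $A_2$ at $p_2$; conjugating back by $L^{-1}$ produces the desired automorphism, which remains a composition of shears and linear maps because a linear conjugate of a shear is again a shear.

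The hard part will be the direction constraint $p_1 - p_2 \not\parallel e_i$ needed to separate the two points under $\ell$; this is exactly what the generic linear normalization resolves. The remaining subtlety is the bookkeeping that forces each individual shear to fix \emph{both} $p_1$ and $p_2$, so that the chain rule makes the differentials multiply cleanly at each point — once this is arranged, the construction of each shear is a routine Hermite interpolation.
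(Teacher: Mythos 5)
Your proposal is correct and follows essentially the same route as the paper: decompose $A_1, A_2$ into elementary transvections and realize each one as the differential of a shear that fixes both points and has identity differential at the other point, after normalizing so that $p_1 - p_2$ is parallel to no coordinate axis. The only (immaterial) difference is that the paper normalizes $p_1 = (0,\ldots,0)$, $p_2 = (1,\ldots,1)$ and writes down the explicit interpolating shear $x_i \mapsto x_i + \lambda(x_j^3 - x_j^2)$, where you invoke a generic linear conjugation and an abstract Hermite interpolation argument.
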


\begin{proof}
	We can assume that $p_1 = (0, \ldots, 0)$,
	$p_2 = (1, \ldots, 1)$. For $i \neq j$ and $\lambda \in \CC$
	the algebraic shear
	\[
		\psi \colon \CC^m \to \CC^m \, , \
		(x_1, \ldots, x_m) \mapsto (x_1, \ldots, x_{i-1},
		x_i + \lambda(x_j^3-x_j^2), x_{i+1}, \ldots, x_m)
	\]
	fixes $p_1$ and $p_2$ and satisfies the following:
	$D_{p_1} \psi$ is the identity matrix
	and $D_{p_2} \psi$ is the elementary matrix with
	entry $(D_{p_2} \psi)_{ij}$ equal to $\lambda$ and all
	other entries the same as the identity matrix. As every matrix
	in $\SL_m$ is the product of elementary matrices,
	we can construct our automorphism
	$\varphi$ by composing automorphisms
	of the form $\psi$ and linear automorphisms that exchange
	$p_1$ and $p_2$.
\end{proof}

\begin{lemma}
	\label{lem:GenericSurj}
	Let $f \colon X \to Y$ be a morphism of algebraic varieties.
	Then there exists a dense open set
	$U \subseteq Y$, such that for all $x \in f^{-1}(U)$, the differential
	$D_x f \colon T_x X \to T_{f(x)} Y$ is surjective.
\end{lemma}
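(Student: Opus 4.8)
The plan is to deduce the statement from generic smoothness in characteristic zero, after two reductions that replace $X$ by a smooth irreducible source and $Y$ by an irreducible target, and then to treat the singular points of the source separately.

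First I would reduce to the case that $Y$ is irreducible. Writing $Y = Y_1 \cup \cdots \cup Y_s$ for the irreducible components and $Y_j^\circ = Y_j \setminus \bigcup_{k \neq j} Y_k$, the sets $Y_j^\circ$ are pairwise disjoint open subsets of $Y$ whose union is dense, and each is irreducible. Since tangent spaces are unchanged under passing to an open subset, glueing the open sets obtained for the strata $f^{-1}(Y_j^\circ) \to Y_j^\circ$ reduces the statement to irreducible $Y$. Next I would reduce to irreducible $X$. Decompose $X$ into irreducible components $X_1, \ldots, X_r$ and set $f_i = f|_{X_i}$. The device I would use repeatedly is that a closed immersion $\iota \colon Z \hookrightarrow X$ induces an injection $T_z Z \hookrightarrow T_z X$ and satisfies $D_z(f \circ \iota) = D_z f \circ D_z \iota$; hence surjectivity of $D_z(f|_Z)$ forces surjectivity of $D_z f$. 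Applied with $Z = X_i$, it suffices to arrange that every point over $U$ lies on a component $X_i$ with $D_x f_i$ surjective. For each component on which $f_i$ is \emph{not} dominant, the closure $\overline{f_i(X_i)} \subsetneq Y$ is a proper closed set; deleting all of these from $Y$ is harmless (a finite intersection of dense opens in the irreducible $Y$) and ensures no point over the remaining open set lies on a non-dominant component. Thus we are reduced to $X$, $Y$ irreducible and $f$ dominant.

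The crux is then generic smoothness. On the smooth locus $V = X_{\mathrm{sm}}$, which is dense open and a smooth variety, the restriction $f|_V \colon V \to Y$ is still dominant, so by the characteristic-zero generic smoothness theorem (Hartshorne III.10.7) there is a dense open $W \subseteq Y$ over which $f|_V$ is a smooth morphism. A smooth morphism has surjective differential at every point of its source (this follows from the \'etale-local description of smoothness, equivalently from the left-exactness of the relative cotangent sequence), so $D_x f$ is surjective for all $x \in V \cap f^{-1}(W)$. This is the step where characteristic zero is essential; the analogue fails in positive characteristic, just as the classical Sard theorem can.

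It remains to control points over $W$ that are \emph{singular} points of $X$, where generic smoothness gives no information. I expect this to be the main obstacle, precisely because the lemma demands surjectivity at \emph{every} preimage point rather than on a dense open subset of the source. I would resolve it by induction on $\dim X$: the singular locus $\Sigma = X \setminus V$ is closed of dimension strictly less than $\dim X$, so by the inductive hypothesis applied to $f|_\Sigma \colon \Sigma \to Y$ there is a dense open $W' \subseteq Y$ with $D_z(f|_\Sigma)$ surjective for all $z \in \Sigma \cap f^{-1}(W')$, whence $D_z f$ is surjective by the closed-immersion device above. Intersecting $W$, $W'$, and the finitely many complements of images of non-dominant components yields the desired $U$, the base case $\dim X = 0$ being immediate. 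Alternatively, one could bypass the induction by fixing a resolution of singularities $\pi \colon \widetilde{X} \to X$ (available in characteristic zero by Hironaka), applying generic smoothness to the smooth source $\widetilde{X}$ through $f \circ \pi$, and then using $D_{\tilde x}(f \circ \pi) = D_x f \circ D_{\tilde x}\pi$ together with the surjectivity of $\pi$ to transport surjectivity down to $X$.
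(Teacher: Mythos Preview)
Your proof is correct and rests on the same two ingredients as the paper's: generic smoothness in characteristic zero (Hartshorne III.10.7) for a smooth source, together with the observation that surjectivity of $D_x(f|_Z)$ for a locally closed subvariety $Z \ni x$ forces surjectivity of $D_x f$. The paper, however, packages this much more economically: rather than reducing to irreducible $Y$, then to irreducible dominant $X$, then inducting on $\dim X$ via the singular locus, it simply decomposes $X$ once and for all into finitely many smooth locally closed disjoint strata $Z_1,\ldots,Z_k$, applies the smooth case to each $f|_{Z_i}$ to obtain dense opens $U_i \subseteq Y$, and takes $U = \bigcap_i U_i$. Your induction on $\dim X$ is, unwound, exactly this stratification (via $X \supset X_{\mathrm{sing}} \supset (X_{\mathrm{sing}})_{\mathrm{sing}} \supset \cdots$), so the arguments are equivalent; your preliminary reductions to irreducible $X$ and $Y$ and the handling of non-dominant components are unnecessary, since the smooth case of generic smoothness already yields a \emph{dense} open $U$ (vacuously when $f$ is not dominant) without any irreducibility hypothesis. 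The alternative via resolution of singularities also works but is heavier machinery than the paper needs.
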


\begin{proof}
	If $X$ is smooth, then the result follows from the
	Generic Smoothness
	Theorem (see \cite[Corollary~10.7]{Har_77}). The general case
	follows from this result, by using the fact that we can decompose
	$X$ into finitely many smooth locally closed
	disjoint subvarieties.
\end{proof}

\begin{lemma}
\label{lemma:lengthycalc}
Let the notation be as in the proof of
Lemma~\ref{lemma:doublepointsllessn}. Then the map
$\overline{\chi}$ is transversal to $\Delta$ at all
points of $U\times U$.
\end{lemma}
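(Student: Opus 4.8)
The plan is to check the transversality fiber by fiber over the diagonal. A point $(v,v')\in U\times U$ with $v\neq v'$ does not lie on $\Delta$, so transversality of $\overline{\chi}$ to $\Delta$ is automatic there; only the diagonal points $(v,v)$ with $v\in U$ require work. First I would describe the fiber $\overline{\chi}^{-1}(v,v)$: a triple $(x,y,z)$, which is automatically pairwise distinct by the definition of the domain of $\overline{\chi}$, lies in it exactly when $y-x$ and $z-x$ both span the line $v$. In particular $p_v(x)=p_v(y)$ and $p_v(x)=p_v(z)$, so $(x,y)$ and $(x,z)$ are double points of $p_v|_X$; since $v\in U$, these double points are transversal by the defining property of $U$.

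Next I would compute the relevant differential. Fix a spanning vector $e$ of the line $v$ and write $y-x=s\,e$ and $z-x=t\,e$ with $s,t\in\CC^\ast$. The essential point is that $y\neq z$ forces $s\neq t$. A short computation in an affine chart of $\PP^{2n}$ around $v$ shows that, under the identification $T_v\PP^{2n}\cong\CC^{2n+1}/v$, one has $D_{se}\pi(w)=\tfrac1s\,p_v(w)$ and $D_{te}\pi(w)=\tfrac1t\,p_v(w)$. Since $\overline{\chi}=(\pi\times\pi)\circ\chi$ and $D_{(x,y,z)}\chi(w_x,w_y,w_z)=(w_y-w_x,\,w_z-w_x)$, this gives
\[
	D_{(x,y,z)}\overline{\chi}(w_x,w_y,w_z)
	=\left(\tfrac1s\,p_v(w_y-w_x),\ \tfrac1t\,p_v(w_z-w_x)\right)
\]
for $(w_x,w_y,w_z)\in T_xX\times T_yX\times T_zX$.

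Finally I would project to the normal space of $\Delta$. Identifying $T_{(v,v)}(\PP^{2n}\times\PP^{2n})/T_{(v,v)}\Delta$ with $\CC^{2n+1}/v$ via $(u_1,u_2)\mapsto u_1-u_2$, transversality of $\overline{\chi}$ to $\Delta$ at $(v,v)$ amounts to surjectivity of
\[
	\tau(w_x,w_y,w_z)=\tfrac1s\,p_v(w_y-w_x)-\tfrac1t\,p_v(w_z-w_x).
\]
Setting $w_z=0$ gives $\tau(w_x,w_y,0)=\tfrac1s\,p_v(w_y)+\big(\tfrac1t-\tfrac1s\big)p_v(w_x)$, and since $s\neq t$ both scalar coefficients are nonzero; hence the image of $\tau$ contains $\im D_y(p_v|_X)+\im D_x(p_v|_X)$, which equals $\CC^{2n+1}/v$ because $(x,y)$ is a transversal double point of $p_v|_X$. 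This yields the surjectivity, and hence the transversality, at every point of the fiber.

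The hard part is the chart computation that produces the scalars $\tfrac1s$ and $\tfrac1t$: one cannot simply invoke that $D\pi$ and $p_v$ agree up to an isomorphism of targets (as in Lemma~\ref{transkey.lem}), because the projection onto the normal space of the diagonal couples the two factors, and it is precisely the ratio $s/t$, controlled by the condition $y\neq z$, that makes the difference of the two components surjective. Keeping track of these scalars is the bulk of the ``lengthy calculation''.
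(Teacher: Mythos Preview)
Your argument is correct and follows essentially the same route as the paper: pass to an affine chart of $\PP^{2n}$ at $v$, compute the differential of $\overline{\chi}$ (picking up the scalars $1/s$ and $1/t$, which in the paper's normalization become $1$ and $1/\lambda$), and then use the transversality of double points of $p_v|_X$ to show that the image of $D\overline{\chi}$ together with $T\Delta$ spans everything. Your presentation is in fact slightly more economical than the paper's: by setting $w_z=0$ you only invoke the transversality of the single double point $(x,y)$, whereas the paper's verification of the key equality $(\ast)$ uses both $(x,y)$ and $(x,z)$ and a specific linear combination of the resulting preimages.
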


\begin{proof}
Let $(x,y,z)\in X^3 $ with $\overline{\chi}(x,y,z)\in\Delta\cap (U\times U)$.
We show that $\overline{\chi}$ is transversal to $\Delta$ at $(x,y,z)$,
i.e.
\[
    \im D_{(x,y,z)}\overline{\chi}+T_{\overline{\chi}(x,y,z)}\Delta=
    T_{\overline{\chi}(x,y,z)}\P^{2n} \times \P^{2n} \, .
\]
After a linear change of coordinates in $\C^{2n+1}$ we may assume that
\[
    y-x=(1,0,\ldots,0)\quad\text{and}\quad
    z-x=(\lambda,0,\ldots,0)
\]
for some $\lambda$ in $\C\backslash\{0,1\}$.
Locally around the points $y-x$ and $z-x$ the canonical projection
$\pi\colon\C^{2n+1}\backslash\{0\}\to \P^{2n}$
looks the same as
\[
    p \colon \C^*\times\C^{2n}\to \C^{2n} \, , \ (x_0, \ldots, x_{2n})
    \mapsto \left( \frac{x_1}{x_0},\ldots,\frac{x_{2n}}{x_0} \right).
\]
More precisely, the open embedding
\[
	g\colon\C^{2n}\to\P^{2n} \, , \ (x_1,\ldots, x_{2n})
	\mapsto (1:x_1:\cdots:x_{2n})
\]
satisfies $\pi=g\circ p$ whenever $p$ is defined.
In particular, $\overline{\chi}$ is transversal to
$\Delta$ at $(x,y,z)$ if and only if the (rational) map
\[
    \widetilde{\chi}\colon X^3 \dashrightarrow
    \C^{2n}\times\C^{2n} \, , \ (a,b,c)
    \mapsto (p(b-a),p(c-a)).
\]
is transversal to the diagonal $\Delta_{\CC^{2n}}$
in $\C^{2n}\times \C^{2n}$ at $(x,y,z)$ (note that $\widetilde{\chi}$ is
defined at $(x, y, z)$).
The differential of $\widetilde{\chi}$  at $(x,y,z)$ is
\[
    \begin{pmatrix}
        -D_{x}h & D_{y}h & 0\\
        -\frac{1}{\lambda}D_{x}h & 0 &
        \frac{1}{\lambda}D_{z}h \\
    \end{pmatrix}
    \colon T_xX\times T_yX\times T_zX \to \C^{2n} \times \CC^{2n} \, ,
\]
where ${h}\colon X\to\C^{2n}$ denotes the map obtained from
the fixed embedding $X \subset \C^{2n+1}$ by forgetting
the first coordinate. Hence, the linear subspace
\[
	\im D_{(x,y,z)}\widetilde{\chi}+T_{\widetilde{\chi}(x,y,z)}\Delta_{\C^{2n}}
\]
of $\CC^{2n} \times \CC^{2n}$ is equal to
\[
    	\left\{ \,
    		\begin{pmatrix}
                		(D_{y}h)b-(D_{x}h)a+u\\
                		\frac{1}{\lambda}(D_{z}h)c - \frac{1}{\lambda}(D_{x}h)a+u
         	\end{pmatrix}
         	\ \Big| \ (a, b, c) \in T_{(x, y, z)} X^3 \, , \ u\in \C^{2n} \,
    	\right\}.
\]
Changing basis in $\C^{2n} \times \CC^{2n}$
by subtracting the first $2n$ coordinates from the last $2n$ coordinates yields
\[
	\left\{ \,
	\left(\begin{matrix} (D_{y}h)b-(D_{x}h)a+u \\
			\frac{1}{\lambda} (D_{z}h)c - \frac{1-\lambda}{\lambda}(D_{x}h)a
			-(D_{y}h)b
	       \end{matrix}
	\right)
	\ \Big| \ (a, b, c) \in T_{(x, y, z)}X^3 \, , \ u\in \C^{2n} \,
	\right\} \, .
\]
This last subspace is $\C^{2n} \times \C^{2n}$ due to
\begin{equation}
	\label{eq:diff=C2n}
	\tag{$\ast$}
	\left\{ \,
		\frac{1}{\lambda} (D_{z}h)c - \frac{1-\lambda}{\lambda}(D_{x}h)a
			-(D_{y}h)b \ \Big| \ (a, b, c) \in T_{(x, y, z)}X^3 \,
	\right\}
	=\C^{2n}.
\end{equation}
The equality~\eqref{eq:diff=C2n} follows from the fact that
\[
    D_{y}h-D_{x}h\colon T_xX\times T_yX\to\C^{2n}\quad\text{and}
    \quad D_{z}h-D_{x}h\colon T_xX\times T_zX\to\C^{2n}
\]
are surjective, which in turn follows from the fact that
$p_{\pi(y-x)} |_X$ and $p_{\pi(z-x)} |_X$ are transversal
(here we use the fact that $\overline{\chi}(x,y,z)$ is in $U\times U$).
More precisely, to prove~\eqref{eq:diff=C2n}
let $w$ be in $\C^{2n}$.
There exist $a_1,a_2$ in $T_xX$, $\tilde{b}$ in $T_yX$ and
$\tilde{c}$ in $T_zX$ such that
\[
    (D_{z}h)\tilde{c}-(D_{x}h)a_1=w=
    (D_{y}h)\tilde{b}-(D_{x}h)a_2 \, .
\]
By setting
\[
	c=\frac{\lambda}{1-\lambda}\tilde{c} \, , \quad
	b=\frac{\lambda}{1-\lambda}\tilde{b} \quad  \textrm{and} \quad
	a=\frac{\lambda}{(1-\lambda)^2}a_1-\frac{\lambda^2}{(1-\lambda)^2}a_2 \, ,
\]
we get
\[
    \frac{1}{\lambda} (D_{z}h)c - \frac{1-\lambda}{\lambda}(D_{x}h)a
			-(D_{y}h)b=w \, .
\]

Hence, $\widetilde{\chi}$ is transversal to the diagonal of $\C^{2n}\times\C^{2n}$ at $(x,y,z)$; and so, $\overline{\chi}$ is transversal to $\Delta$ at $(x,y,z)$.
\end{proof}

\begin{lemma}
	\label{lemma:transversality}
	Let $f \colon X \to Y$ be a morphism of algebraic varieties.
	If $Z$ is a closed subvariety of $Y$ such that $f$ is transversal to $Z$ at
	$z$, then the local dimension of $\im f \cap Z$ at $z$ is smaller
	than or equal to
	\[
		\max_{x \in f^{-1}(z)} \dim T_x X + \dim T_{z} Z - \dim T_{z} Y \, .
	\]
\end{lemma}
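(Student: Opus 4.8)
The plan is to reduce the statement about the local dimension of $\im f \cap Z$ to a statement about the local dimension of the preimage $(f|_{f^{-1}(Z)})^{-1}(z)$ together with a transversality-driven dimension bound on $f^{-1}(Z)$ itself. The transversality hypothesis at $z$ says that $\im D_x f + T_z Z = T_z Y$ for every $x \in f^{-1}(z)$; equivalently, the map $D_x f$ followed by the quotient $T_z Y \to T_z Y / T_z Z$ is surjective. This is precisely the condition that guarantees $f$ meets $Z$ cleanly, and it should let me control the dimension of the fiber of $f$ over $z$ inside $f^{-1}(Z)$.

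First I would work locally near a point $x \in f^{-1}(z)$ and analyze the scheme-theoretic preimage $W = f^{-1}(Z)$. The transversality condition $\im D_x f + T_z Z = T_z Y$ means that the composite $T_x X \xrightarrow{D_x f} T_z Y \to T_z Y / T_z Z$ is surjective, so the tangent space $T_x W = (D_x f)^{-1}(T_z Z)$ has codimension exactly $\dim T_z Y - \dim T_z Z$ inside $T_x X$. Hence the local dimension of $W$ at $x$ is at most $\dim T_x X - (\dim T_z Y - \dim T_z Z)$. This is the quantitative heart of the argument and exploits transversality directly.

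Next I would pass from $W$ to its image. Since $\im f \cap Z = f(W)$, the local dimension of $\im f \cap Z$ at $z$ is bounded by $\dim_x W$ for some $x \in f^{-1}(z)$, because the image of a variety under a morphism has dimension at most that of the source and every point of $f(W)$ near $z$ has a preimage in $W$ near $f^{-1}(z)$. Combining this with the codimension bound from the previous step, and maximizing over the finitely many relevant components meeting $f^{-1}(z)$, yields
\[
	\dim_z (\im f \cap Z) \leq \max_{x \in f^{-1}(z)} \dim T_x X - \dim T_z Y + \dim T_z Z \, ,
\]
which is exactly the claimed inequality.

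\textbf{The main obstacle} I anticipate is making the passage from $W$ to $f(W)$ dimension-theoretically rigorous at the \emph{local} level: a priori, components of $W$ through different points of $f^{-1}(z)$ could contribute, and one must ensure that only components actually passing through the fiber over $z$ are counted, which is why the maximum over $x \in f^{-1}(z)$ appears. I would handle this by restricting to a sufficiently small neighborhood of $z$ in $Y$ so that $W$ decomposes into pieces each meeting $f^{-1}(z)$, and then invoke upper semicontinuity of fiber dimension to bound $\dim_z f(W)$ by the maximal local dimension of $W$ along the fiber. The tangent-space inequality $\dim_x W \leq \dim T_x W$ is automatic since $W$ need not be smooth, and this is harmless as it only weakens the bound in the desired direction.
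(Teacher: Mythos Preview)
Your proposal is correct and follows essentially the same route as the paper: use transversality at each $x \in f^{-1}(z)$ to show that $T_x X / (D_x f)^{-1}(T_z Z) \cong T_z Y / T_z Z$, bound $\dim T_x f^{-1}(Z)$ by $\dim (D_x f)^{-1}(T_z Z)$, and then pass from $f^{-1}(Z)$ to its image. The paper is terser---it records only the inclusion $T_x f^{-1}(Z) \subseteq (D_x f)^{-1}(T_z Z)$ and dispatches the passage to $\im f \cap Z$ in a phrase---but the argument is identical.
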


\begin{proof}
	Let $x \in f^{-1}(z)$. Since $f$ is transversal to $Z$ at $z$, the differential
	$D_x f$ induces an isomorphism
	\[
		T_x X / (D_x f)^{-1}(T_z Z) \stackrel{\sim}{\longrightarrow}
		T_z Y / T_z Z \, .
	\]
	Since $T_x f^{-1}(Z)$ lies in $(D_x f)^{-1}(T_z Z)$, we get the following
	inequality
	\[
		\dim T_x f^{-1}(Z) \leq \dim T_x X - \dim T_{z} Y + \dim T_z Z \, .
	\]
	And so the statement follows.
\end{proof}

\def\cprime{$'$} \def\cprime{$'$}
\providecommand{\bysame}{\leavevmode\hbox to3em{\hrulefill}\thinspace}
\providecommand{\MR}{\relax\ifhmode\unskip\space\fi MR }
\providecommand{\MRhref}[2]{%
  \href{http://www.ams.org/mathscinet-getitem?mr=#1}{#2}
}
\providecommand{\href}[2]{#2}

\end{document}